\pdfoutput=1  % force pdflatex on arXiv (must be within the first 5 lines)
\documentclass[letterpaper,10pt]{article}
\usepackage[margin=1in]{geometry}

\usepackage{graphicx}%
\usepackage{multirow}%
\usepackage{amsmath,amssymb,amsfonts}%
\usepackage{amsthm}%
\usepackage{mathrsfs}%
\usepackage{xcolor}%
\usepackage{textcomp}%
\usepackage{booktabs}%
\usepackage{float}%
\usepackage{algorithm}%
\usepackage{algpseudocode}%
\usepackage{subcaption}%
\usepackage{caption}
\usepackage{mathtools}
\usepackage{listings}%

\usepackage[authoryear,round]{natbib}
\setcitestyle{authoryear,round,citesep={;},aysep={,},yysep={;}}

\usepackage{hyperref}
\usepackage{url}

\usepackage{comment}
\usepackage{relsize}
\usepackage{ifthen}
\usepackage[colorinlistoftodos]{todonotes}

\usepackage{graphics} % for pdf, bitmapped graphics files
\usepackage{rotating}
\usepackage{color}
\usepackage{enumerate}
\usepackage[T1]{fontenc}
\usepackage{psfrag}
\usepackage{epsfig} % for postscript graphics files
\usepackage{booktabs}
\usepackage{graphicx,url}
\usepackage{multirow}
\usepackage{array}
\usepackage{latexsym}
\usepackage{amsfonts}
\usepackage{amsmath}
\usepackage{amssymb}
\usepackage{mathtools}
\usepackage{xstring}
\usepackage{multirow}
\usepackage{xcolor}
\usepackage{prettyref}
\usepackage{bigdelim}
\usepackage{listings}

\usepackage{enumitem}
\usepackage{xspace}
\usepackage{bm}
\usepackage{tikz}
\usetikzlibrary{matrix,calc}

\usepackage{amsmath,array,arydshln,xparse}
\usepackage{adjustbox}

\usepackage{mdwlist}

\makecompactlist{itemize}{stditemize}

\newrefformat{prob}{Problem\,\ref{#1}}
\newrefformat{def}{Definition\,\ref{#1}}
\newrefformat{sec}{Section\,\ref{#1}}
\newrefformat{sub}{Section\,\ref{#1}}
\newrefformat{prop}{Proposition\,\ref{#1}}
\newrefformat{app}{Appendix\,\ref{#1}}
\newrefformat{alg}{Algorithm\,\ref{#1}}
\newrefformat{cor}{Corollary\,\ref{#1}}
\newrefformat{thm}{Theorem\,\ref{#1}}
\newrefformat{lem}{Lemma\,\ref{#1}}
\newrefformat{fig}{Fig.\,\ref{#1}}
\newrefformat{tab}{Table\,\ref{#1}}

\newcommand{\bdmath}{\begin{dmath}}
\newcommand{\edmath}{\end{dmath}}
\newcommand{\beq}{\begin{equation}}
\newcommand{\eeq}{\end{equation}}
\newcommand{\bdm}{\begin{displaymath}}
\newcommand{\edm}{\end{displaymath}}
\newcommand{\bea}{\begin{eqnarray}}
\newcommand{\eea}{\end{eqnarray}}
\newcommand{\beal}{\beq \begin{array}{ll}}
\newcommand{\eeal}{\end{array} \eeq}
\newcommand{\beas}{\begin{eqnarray*}}
\newcommand{\eeas}{\end{eqnarray*}}
\newcommand{\ba}{\begin{array}}
\newcommand{\ea}{\end{array}}
\newcommand{\bit}{\begin{itemize}}
\newcommand{\eit}{\end{itemize}}
\newcommand{\ben}{\begin{enumerate}}
\newcommand{\een}{\end{enumerate}}

\newcommand{\myParagraph}[1]{{\bf #1.}\xspace}
\newcommand{\hide}[1]{}

\newcommand{\hiddenText}{{\color{gray} hidden text.}}
\newcommand{\hideWithText}[1]{\hiddenText}

\newcommand{\norm}[1]{\left\| #1 \right\|}

\newcommand{\blue}[1]{{\color{blue}#1}}

\newcommand{\linkToPdf}[1]{\href{#1}{\blue{(pdf)}}}
\newcommand{\linkToPpt}[1]{\href{#1}{\blue{(ppt)}}}
\newcommand{\linkToCode}[1]{\href{#1}{\blue{(code)}}}
\newcommand{\linkToWeb}[1]{\href{#1}{\blue{(web)}}}
\newcommand{\linkToVideo}[1]{\href{#1}{\blue{(video)}}}
\newcommand{\linkToMedia}[1]{\href{#1}{\blue{(media)}}}
\newcommand{\award}[1]{\xspace} % {{\red{#1}}} % omit awards

\newcommand{\Sn}{\mathbb{S}^n}
\newcommand{\R}{\mathbb{R}}

\renewcommand{\norm}[1]{\lVert #1 \rVert}
\newcommand{\inprod}[2]{\langle #1, #2 \rangle}

\newcommand{\sym}[1]{\mathbb{S}^{#1}}

\newcommand{\bmat}{\left[ \begin{array}}
\newcommand{\emat}{\end{array}\right]}
\newcommand{\psd}[1]{\sym{#1}_{+}}

\newcommand{\half}{\frac{1}{2}}

\newcommand{\abs}[1]{\left|#1\right|}

\newcommand{\ceil}[1]{\left\lceil #1 \right\rceil}

\hypersetup{colorlinks, hypertexnames=false, pageanchor=true,
            linkcolor=blue, citecolor={green!50!black}, urlcolor=cyan,
            pdftitle={Spectral Deflation for Factorization-Free Matrix Filtering in Muon and Semidefinite Programming}
            }
\mathtoolsset{centercolon}

\newtheorem{theorem}{Theorem}
\newtheorem{lemma}[theorem]{Lemma}
\newtheorem{proposition}[theorem]{Proposition}

\newtheorem{example}{Example}
\newtheorem*{theoremrestated}{Theorem~1 (Exact deflation raises the spectral floor; restated with the deflated-error bound)}
\newtheorem*{proprestated}{Proposition~2 (Deflated PSD projection; restated with the deflated-error bound)}
\graphicspath{{fig/}}

\title{Spectral Deflation for Factorization-Free Matrix Filtering\\ in Muon and Semidefinite Programming}

\author{%
  Haoran Sun\thanks{School of Engineering and Applied Sciences, Harvard University.}\\
  \texttt{elvis271828@gmail.com}
  \and
  Shucheng Kang\footnotemark[2]\\
  \texttt{skang1@g.harvard.edu}
  \and
  Heng Yang\footnotemark[2]\\
  \texttt{hankyang@g.harvard.edu}
}

\begin{document}

\maketitle

%!TEX root = ../main.tex

\begin{abstract}
GPU implementations of the Muon optimizer and of first-order semidefinite programming (SDP) solvers share one computational pattern: a matrix factorization is replaced by a fixed-depth polynomial filter applied after normalization.
When a few dominant spectral components carry most of the input's scale, normalization pushes the remaining spectrum toward zero, where the filter is least accurate.
We propose \emph{spectral deflation}: estimate the dominant components, remove them, filter the normalized residual with the unchanged filter, and restore them.
Deflation preserves the target matrix function, and we prove that it strictly reduces the finite-step error of the classical Newton--Schulz family.
Implemented with batched randomized SVD for Muon and warm-started subspace tracking for ADMM, deflation consistently improves GPT-2 pretraining over the corresponding Muon baselines with both the Newton--Schulz and Polar Express mappings, also in wall-clock time, and lowers the KKT residuals of factorization-free ADMM on large-scale SDPs within a similar projection time.
\end{abstract}

%!TEX root = ../main.tex

\section{Introduction}
\label{sec:intro}

Large-scale numerical computing on GPUs increasingly replaces matrix factorizations with matrix--matrix products.
A prominent instance of this trend approximates a spectral matrix function $f(A)$ by a \emph{fixed-depth polynomial filter}: the input is normalized once, and a short fixed sequence of polynomials is applied,
\begin{equation}
X_0=\frac{A}{\theta(A)},
\qquad
X_t=p_t(X_{t-1}),\quad t=1,\ldots,L,
\qquad
X_L\approx f(A),
\label{eq:filter}
\end{equation}
where $\theta(A)>0$ is a normalization scale and each $p_t$ is evaluated by a few matrix--matrix multiplications.
The filter avoids any factorization and is therefore fast and low-precision friendly; however, because the depth $L$ is fixed, its accuracy depends strongly on the normalized spectrum of $X_0$.

\myParagraph{Two applications}
In deep learning, the Muon optimizer updates every matrix-valued parameter $W$ in the direction of the polar factor of its momentum matrix $M\in\R^{n\times m}$~\citep{jordan2024muon,liu2025muonscalable}:
$W\leftarrow W-\lambda\operatorname{polar}(M)$, where $\operatorname{polar}(M):=UV^{\top}$ for a compact singular value decomposition $M=U\Sigma V^{\top}$.
In words, Muon keeps the singular directions of $M$ and maps every nonzero singular value to one.
An SVD at every optimizer step is too expensive, so Muon computes \eqref{eq:filter} with $\theta=\norm{M}_F$ and five fixed Newton--Schulz-type polynomials.
In numerical optimization, first-order semidefinite programming (SDP) solvers repeatedly project a symmetric matrix $Z$ onto the positive semidefinite (PSD) cone~\citep{wen2010admm,odonoghue2016scs,boyd2011admm}.
The projection applies the ReLU function $\max(x,0)$ to the eigenvalues of $Z$.
Since $\max(x,0)=x\bigl(1+\operatorname{sign}(x)\bigr)/2$, computing the projection reduces to computing the matrix sign function $\operatorname{sign}(Z)$.
Recent factorization-free solvers again approximate $\operatorname{sign}(Z)$ by \eqref{eq:filter}, where $\theta$ is an estimated upper bound on $\norm{Z}_2$ and the fixed polynomials $p_t$ form a composite sign filter~\citep{kang2025psdprojection}.
Muon and factorization-free ADMM therefore share one computational core, and we treat them together.

\myParagraph{The spectral floor}
In both applications, the matrices entering \eqref{eq:filter} have spectra of a specific shape.
Momentum matrices in GPT-2 training carry a few large singular values and many near zero (Figure~\ref{fig:spectrum}; see also \citealp{magakyan2026spectral}), and matrices inside SDP solvers often approach a head-dominated regime~\citep{kang2025psdprojection}.
Normalizing by the whole matrix then compresses the tail of the spectrum.
We call the smallest normalized nonzero spectral magnitude of $X_0$ its \emph{spectral floor}.
To see the problem, assume that Muon's polynomial has slope $a>1$ at the origin, so five iterations amplify a small singular value by at most $a^5$; a singular value below $1/a^5$ of the Frobenius norm therefore ends far from one, and its direction is strongly attenuated in the computed polar factor.
This raises the question at the center of this paper:
\begin{center}
\emph{Can we lift the small spectral components that a fixed-depth polynomial filter fails to resolve,
without redesigning the filter and without any matrix factorization?}
\end{center}
We give an affirmative answer through a simple principle: \emph{deflate} the dominant components, \emph{filter} the normalized residual, and \emph{restore} what was removed.

\myParagraph{Spectral deflation}
Let $M=U_k\Sigma_kV_k^{\top}+R_k$ split off the $k$ largest singular triplets of $M$, and let $Z=U_k\Lambda_kU_k^{\top}+R_k$ analogously split off the $k$ eigenvalues of largest magnitude of a symmetric $Z$.
Both targets decompose exactly:
\begin{equation}
\operatorname{polar}(M)=U_kV_k^{\top}+\operatorname{polar}(R_k),
\qquad
\Pi_{\psd{n}}(Z)=U_k(\Lambda_k)_{+}U_k^{\top}+\Pi_{\psd{n}}(R_k),
\label{eq:identities}
\end{equation}
where $(\Lambda_k)_{+}$ zeroes the negative eigenvalues of the head.
In words, deflation never changes the target; it only changes the input that the filter sees.
Applying \eqref{eq:filter} to $R_k$ instead of the full matrix raises every remaining spectral magnitude by the factor $\alpha_k=\theta(A)/\theta(R_k)>1$, which is large exactly when the head dominates, and the deflated head is handled explicitly through \eqref{eq:identities}.
The filter itself is untouched.
This distinguishes our approach from work that redesigns the polynomials~\citep{grishina2025cans,amsel2025polar}, from Muon variants that change the update's scaling or geometry (Appendix~\ref{sec:related_work}), and from randomized low-rank polar approximations that replace the full-rank target~\citep{choudhury2026muon}. Classical deflation treats obstructive eigencomponents separately in Krylov methods~\citep{saad2000deflated,eiermann2011deflated}, and we adapt the same idea to dense fixed-depth filters, where the obstacle is the normalization scale.
Figure~\ref{fig:intro_loss} previews the end-to-end effect on both applications.

\begin{figure}[t]
\centering
% The two boxes are sized so that both panels come out the same height:
% the loss figure is 873 x 228 pt (aspect 3.82), the two-instance ADMM panel
% 338 x 174 pt (aspect 1.94), so 0.657/3.82 = 0.333/1.94 (up to rounding);
% the boxes sum to 0.99\linewidth so the gap between (a) and (b) is small.
% The ADMM panel is produced by defadmm/scripts/plot_intro_panel.py G55mc,G59mc.
\begin{subfigure}[b]{0.657\linewidth}
\centering
\includegraphics[width=\linewidth]{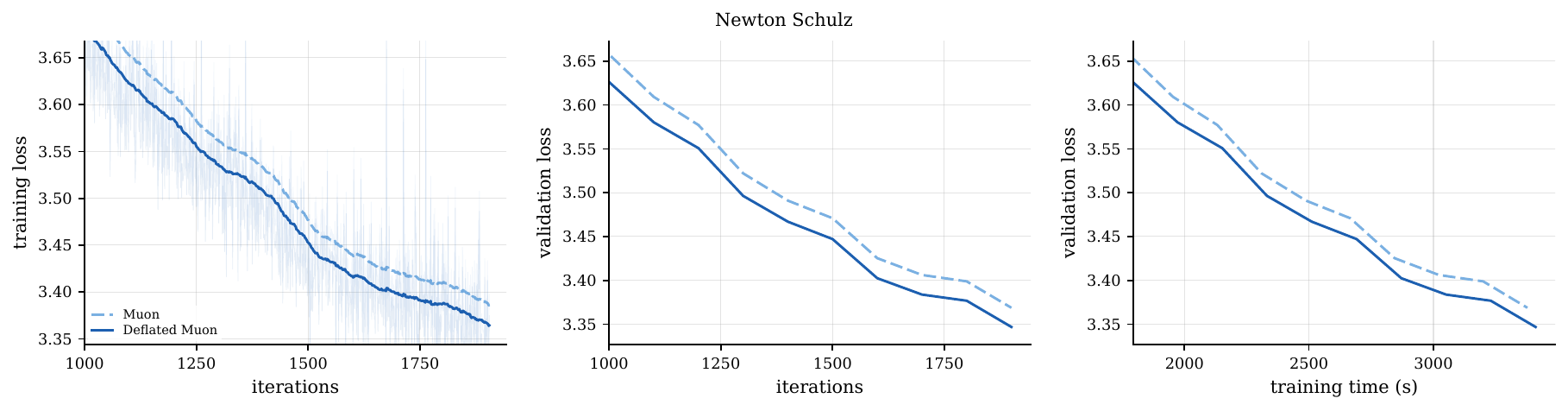}
\caption{\textsc{Deflated Muon}}
\label{fig:intro_muon}
\end{subfigure}\hfill
\begin{subfigure}[b]{0.333\linewidth}
\centering
\includegraphics[width=\linewidth]{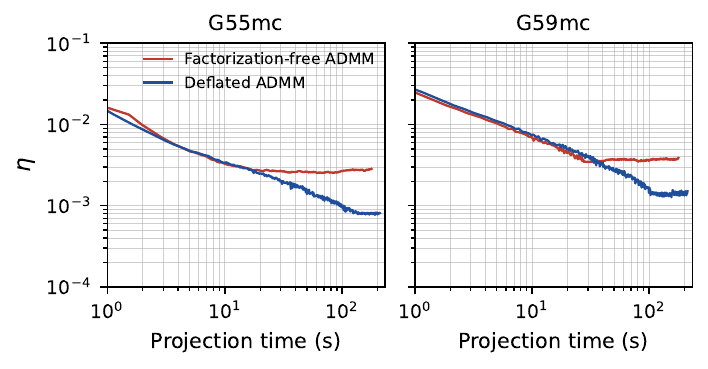}
\caption{ADMM with deflated projection}
\label{fig:intro_admm}
\end{subfigure}
\caption{(a) Training loss against optimizer iterations (left), validation loss against optimizer iterations (middle) and validation loss against training time (right) of GPT-2 Large trained on one billion tokens from FineWeb;
we compare Muon with the Newton--Schulz mapping against \textsc{Deflated Muon}.
(b) KKT residual $\eta$ of ADMM on the max-cut SDPs \texttt{G55mc} and \texttt{G59mc} ($n=5{,}000$) against cumulative projection time, with every PSD projection performed in FP16:
the factorization-free filter of \citet{kang2025psdprojection} (red) and the same filter behind deflation (blue).}
\label{fig:intro_loss}
\end{figure}

\myParagraph{Contributions}
Our contributions are threefold.
(i) We identify the spectral floor as a shared accuracy bottleneck of fixed-depth polynomial filters in Muon and in factorization-free ADMM, and we formalize \emph{spectral deflation} as a target-preserving wrapper around an unchanged filter, based on the exact decompositions \eqref{eq:identities}.
(ii) We instantiate the wrapper as two GPU primitives: a \emph{deflated polar factor} for Muon, driven by a batched randomized SVD that processes all momentum matrices of one shape at once (\S\ref{sec:deflated_muon}), and a \emph{deflated PSD projection} for ADMM, driven by a warm-started subspace iteration that tracks the dominant eigenspace across consecutive projections (\S\ref{sec:deflated_psd}).
On the theory side, we prove that exact deflation raises the spectral floor from $\ell$ to $\min(\alpha_k\ell,\gamma^{-1})$, where $\gamma$ is the padding parameter, and contracts the finite-step Newton--Schulz error accordingly (Theorem~\ref{thm:deflated_ns}), and we prove the analogous statement for the Newton--Schulz PSD projection (Proposition~\ref{prop:deflated_psd_ns}); Appendix~\ref{app:deflated_admm_error} gives a matching bound for the fixed projection filter used in practice.
(iii) Empirically, deflation improves both applications under the same filter budget: pretraining GPT-2 Large on FineWeb, \textsc{Deflated Muon} attains lower validation loss than Muon across learning rates, momentum coefficients, iteration counts, and training horizons, for both mappings and also against wall-clock time; and on four large SDPs with FP16 projections, ADMM with the deflated projection reaches lower KKT residuals within the same projection-time budget.

\myParagraph{Organization}
\S\ref{sec:deflation} develops spectral deflation: the mechanism (\S\ref{sec:spectrum}), the two primitives (\S\ref{sec:deflated_muon}--\ref{sec:deflated_psd}), and the error analysis (\S\ref{sec:error_analysis}).
\S\ref{sec:experiments} evaluates matrix-function accuracy, GPT-2 pretraining, and SDP solving; \S\ref{sec:conclusion} concludes.
Related work is reviewed in Appendix~\ref{sec:related_work}.

% \myParagraph{Notation}
% $\sigma_1(A)\geq\sigma_2(A)\geq\cdots$ are singular values and $\lambda_i(Z)$ eigenvalues of a symmetric $Z$; $\norm{\cdot}_F$ and $\norm{\cdot}_2$ are the Frobenius and spectral norms; $\Sn$ and $\psd{n}$ are the symmetric matrices and the PSD cone, with $\Pi_{\psd{n}}$ the Euclidean projection onto $\psd{n}$; $(x)_+:=\max\{x,0\}$; and $\sigma(X)$ denotes the set of nonzero singular values of $X$.

%!TEX root = ../main.tex

\section{Spectral Deflation}
\label{sec:deflation}

\myParagraph{Notation}
$\sigma_1(A)\geq\sigma_2(A)\geq\cdots$ are singular values and $\abs{\lambda_1(Z)}\geq\abs{\lambda_2(Z)}\geq\cdots$ eigenvalues of a symmetric $Z$ ordered by magnitude; $\norm{\cdot}_F$ and $\norm{\cdot}_2$ are the Frobenius and spectral norms; $\Sn$ and $\psd{n}$ are the symmetric matrices and the PSD cone, with $\Pi_{\psd{n}}$ the Euclidean projection onto $\psd{n}$; $(x)_+:=\max\{x,0\}$; and $\sigma(X)$ denotes the set of nonzero singular values of $X$.

\subsection{The Spectral Floor}
\label{sec:spectrum}

We first make the mechanism precise for Muon.
For the polar factor, the filter \eqref{eq:filter} with $\theta=\norm{M}_F$ is a Newton--Schulz iteration;
we write it using the classical degree-five map~\citep{bjorck1971iterative},
\begin{equation}
X_0
=
\frac{M}{\norm{M}_F},
\
X_{t+1}
=
\mathcal{N}(X_t)
:=
aX_t
+b(X_tX_t^{\top})X_t
+c(X_tX_t^{\top})^2X_t,
\ t=0,\ldots,L-1,
\label{eq:muon_ns}
\end{equation}
with $a=15/8$, $b=-5/4$, and $c=3/8$ (Muon's implementation retunes these coefficients for a larger slope at the origin; \S\ref{subsec:convergence_experiments} gives the values).
At the spectral level, each step applies the scalar polynomial $p(x)=ax+bx^3+cx^5$ to every singular value; under this map, every nonzero singular value of $X_0$ increases monotonically to one, so $X_t\to\operatorname{polar}(M)$ as $t\to\infty$.
The difficulty lies in the fixed budget $L$: since $p(x)\leq ax$ on $[0,1]$, a singular value starting at $x$ reaches at most $a^Lx$ after $L$ steps, so any $x\ll a^{-L}$ remains far from one within the budget.

\myParagraph{What deflation does}
For a momentum matrix $M$, the singular values of $X_0$ in \eqref{eq:muon_ns} are the Frobenius-normalized values $\hat{\sigma}_i(M)=\sigma_i(M)/\norm{M}_F$.
Across the attention and MLP momentum matrices recorded during GPT-2 training (setup in \S\ref{subsec:convergence_experiments}), these spectra share a common structure: a few large singular values---the \emph{head}---dominate, while the remaining ones---the \emph{tail}---concentrate near zero (Figure~\ref{fig:spectrum} in Appendix~\ref{app:toy_deflation}).
The spectral floor of $X_0$ is then tiny, and the fixed-depth filter fails to resolve the tail, as discussed in \S\ref{sec:intro}.
Deflation changes the normalization, not the polynomial: consider the exact rank-$k$ decomposition $M=U_k\Sigma_kV_k^{\top}+R_k$ with $1\le k<q=\operatorname{rank}(M)$, and normalize the residual instead of $M$.
Every remaining singular value is then rescaled by
\begin{equation}
\frac{\sigma_i(M)}{\norm{M}_F}
\;\longrightarrow\;
\frac{\sigma_i(M)}{\norm{R_k}_F}
=\alpha_k\,\frac{\sigma_i(M)}{\norm{M}_F},
\ ~\text{where}~
\alpha_k:=\frac{\norm{M}_F}{\norm{R_k}_F}>1,
\label{eq:tail_amplification}
\end{equation}
while the target is unchanged by \eqref{eq:identities}: the deflated head is restored explicitly, and only the residual passes through the filter.
When the head carries most of the Frobenius mass, $\alpha_k$ is large, and the whole tail is lifted into a region where the fixed-depth approximation is substantially more accurate.
\S\ref{sec:error_analysis} makes this quantitative: the spectral floor of the initialization rises by exactly the factor $\alpha_k$, and this factor controls both error bounds of Theorem~\ref{thm:deflated_ns}.

\begin{example}[A head-dominated toy spectrum]
\label{ex:toy}
Let $M$ have singular values $(10,1,0.5,0.2,0.1)$, so the single head $\sigma_1$ dominates the Frobenius norm, $\norm{M}_F\approx10.06$.
Under the standard normalization, the smallest singular value starts at the spectral floor $\sigma_5/\norm{M}_F\approx10^{-2}$; after five steps of \eqref{eq:muon_ns} it has reached only $0.23$.
Deflating $\sigma_1$ gives $\norm{R_1}_F\approx1.14$, hence $\alpha_1\approx8.8$: the same singular value now starts at $8.8\times10^{-2}$ and reaches $0.988$ after five steps, and the remaining tail values, lifted by the same factor, converge earlier still (Figure~\ref{fig:toy_deflation} in Appendix~\ref{app:toy_deflation}).
\end{example}

One computational question remains: deflation needs the leading spectral components of every input matrix at every step, and estimating them must cost far less than the filter itself.
The next two subsections give one such estimator for each application.

\subsection{Deflated Polar Factor and \textsc{Deflated Muon}}
\label{sec:deflated_muon}

Suppose estimated leading triplets $(U,s,V)$ of $M$ are available, with $s_1\geq\cdots\geq s_r$ (from Algorithm~\ref{alg:brsvd} below); Algorithm~\ref{alg:deflated_muon} states the resulting primitive.
Deflation activates only when the gate $s_r<\tau s_1$ fires, which implies $k<r$: the computed window always contains the entire deflated head, and the same $\tau$ serves as gate and rank threshold.

\begin{algorithm}[t]
\caption{\textsc{Deflated Polar Factor} (one matrix)}
\label{alg:deflated_muon}
\begin{algorithmic}[1]
\Require $M\in\R^{n\times m}$, $n\ge m$; its leading triplets $(U,s,V)$ from Algorithm~\ref{alg:brsvd}; threshold $\tau\in(0,1)$; padding $\gamma>1$; polynomial map $\mathcal{N}$; iteration count $L\ge0$
\If{$s_r/s_1<\tau$} \Comment{deflation gate}
   \State $k\gets\max\{i:s_i>\tau s_1\}$;\quad $U_k,V_k\gets$ leading $k$ columns of $U,V$ \Comment{the gate implies $k<r$}
   \State $R\gets M-U_k\,\operatorname{diag}(s_1,\ldots,s_k)\,V_k^{\top}$
   \State $X_0\gets R/\norm{R}_F+\gamma^{-1}U_kV_k^{\top}$
\Else
   \State $X_0\gets M/\norm{M}_F$
\EndIf
\State \Return $\mathcal{N}^{\circ L}(X_0)$
\end{algorithmic}
\end{algorithm}

\myParagraph{Padding}
Because the estimated triplets are inexact, the subtraction leaves small components along the head directions, which the polynomial's large slope near the origin would amplify; we therefore reinsert the estimated head into the filter initialization at magnitude $\gamma^{-1}$.
The margin $\gamma>1$ is a practical safeguard against overshoot from approximate deflation and low-precision arithmetic (with exact triplets, $\norm{X_0}_2\leq1$ holds automatically).

\myParagraph{Estimating the head}
Deflation needs only the leading $r\ll m$ singular triplets, which randomized SVD estimates from a sketch of $r'=r+\ceil{\delta m}$ columns, where $\delta$ is an oversampling ratio~\citep{halko2011finding}.
Applying it to each momentum matrix separately, however, is inefficient on a GPU for two reasons.
First, momentum matrices are small relative to the device, so a per-matrix loop is dominated by kernel-launch latency rather than arithmetic.
Second, all matrices of one shape can share every stage of the computation.
We therefore stack same-shape matrices into a tensor $A\in\R^{B\times n\times m}$ and run one \textsc{Batched Randomized SVD} per group (Algorithm~\ref{alg:brsvd}, Appendix~\ref{app:brsvd}): a Gaussian sketch, CholeskyQR orthonormalization, $J$ batched subspace iterations, a thin SVD of the reduced $r'\times m$ matrices $Q^{\top}A$, and truncation to the leading $r=\ceil{wm}$ triplets, where the window ratio $w\in(0,1)$ sets what fraction of the spectrum is estimated; wide matrices are handled through $\operatorname{polar}(M^{\top})=\operatorname{polar}(M)^{\top}$.
Every large operation is a batched GEMM or a batched CholeskyQR, and an SVD touches only the reduced matrices, so the front-end is asymptotically cheaper than the polynomial iterations.

\myParagraph{Muon with deflation}
\textsc{Deflated Muon} replaces the polar-factor routine inside Muon by Algorithm~\ref{alg:deflated_muon}: at every optimizer step, Algorithm~\ref{alg:brsvd} runs once per shape group, and each momentum matrix is then processed for $L=5$ steps with the practical mapping in use---Muon's retuned map or Polar Express (\S\ref{subsec:convergence_experiments}).
The primitive is agnostic to $\mathcal{N}$; the classical map \eqref{eq:muon_ns} is the reference for the analysis in \S\ref{sec:error_analysis}.
When the gate does not fire, $X_0=M/\norm{M}_F$ and the method reduces exactly to Muon.
The values of $w$, $\tau$, and $\gamma$ are reported in \S\ref{sec:experiments}.

\subsection{Deflated PSD Projection and ADMM}
\label{sec:deflated_psd}

We now instantiate the same wrapper for PSD projection.
For a symmetric matrix $Z=V\Lambda V^{\top}$, the PSD-cone projection is a matrix ReLU~\citep{higham1988nearest}, 
and it can be rewritten through the matrix sign function:
\begin{equation}
\Pi_{\psd{n}}(Z)
=
V\max(\Lambda,0)V^{\top}
=
\half Z\bigl(I+\operatorname{sign}(Z)\bigr).
\label{eq:psd_sign}
\end{equation}
Factorization-free solvers approximate $\operatorname{sign}(Z)$ by a fixed composite odd filter $g=p_T\circ\cdots\circ p_1$ built offline (Appendix~\ref{app:filter}; \citealp{kang2025psdprojection,lee2022minimax}), which yields the baseline projection
\begin{equation}
\widehat{\Pi}_{+}(Z)
=
\half Z\bigl(I+g(Z/\theta)\bigr),
\label{eq:baseline_projection}
\end{equation}
where $\theta$ is a Lanczos-based upper estimate of $\norm{Z}_2$ that places the spectrum of $Z/\theta$ in $[-1,1]$ (Appendix~\ref{app:admm_sdp}).
The deflated projection, stated as Algorithm~\ref{alg:deflation_admm}, mirrors Algorithm~\ref{alg:deflated_muon}.
Its gate acts on the magnitudes $\abs{\lambda_i}$ of the estimated eigenpairs $(\lambda_i,u_i)$, because the sign filter must resolve both large positive and large negative eigenvalues; the deflated directions are padded at $\pm\gamma^{-1}$; and the residual $R=Z-\sum_{i\in\mathcal{I}}\lambda_iu_iu_i^{\top}$, normalized by its own Lanczos-based upper estimate $\hat{\theta}\geq\norm{R}_2$, passes through the same filter $g$.
When reconstructing the projection, only the positive deflated eigencomponents are restored, since $(\lambda)_+=0$ for $\lambda<0$.
To ensure that $P$ is symmetric despite rounding errors in the reconstruction, we symmetrize it explicitly, $P\leftarrow\half(P+P^{\top})$.

\myParagraph{Warm-started subspace tracking}
The estimator differs from Muon's for a structural reason.
Momentum matrices are numerous and grouped by shape, so the polar routine sketches each group afresh at every optimizer step.
ADMM instead produces one large, slowly varying symmetric matrix per iteration, so its dominant eigenspace can be carried across iterations.
We maintain an orthonormal basis $V\in\R^{n\times K}$ with $K=\ceil{w'n}$: each projection performs one block step $\widetilde{Z}=ZV$ and one Rayleigh--Ritz step, uses the leading $r=\ceil{wn}\leq K$ Ritz magnitudes for the gate, and orthonormalizes $\widetilde{Z}$ as the next basis, in the spirit of warm-started block eigensolvers~\citep{knyazev2001lobpcg}.
Because consecutive iterates are close, one block subspace step per iteration is sufficient in practice; $V$ starts as a Gaussian random block, and the gate stays off during an initial warm-up phase while the basis stabilizes.
Algorithm~\ref{alg:deflation_admm} summarizes one projection; the constants are reported in \S\ref{sec:sdp_experiments}.

\begin{algorithm}[t]
\caption{\textsc{Deflated PSD Projection} (one projection)}
\label{alg:deflation_admm}
\begin{algorithmic}[1]
\Require $Z\in\Sn$; basis $V\in\R^{n\times K}$, $K=\ceil{w'n}$, from the previous iteration; window ratios $w\leq w'$; threshold $\tau$; padding $\gamma$; composite filter $g=p_T\circ\cdots\circ p_1$
\State $\widetilde{Z}\gets ZV$;\quad
eigendecompose $V^{\top}\widetilde{Z}=E\operatorname{diag}(\lambda)E^{\top}$;\quad
$U\gets VE$ \Comment{Ritz pairs $(\lambda_i,u_i)$}
\State $s\gets$ the values $\abs{\lambda_i}$ sorted decreasingly;\quad
$r\gets\ceil{wn}$
\If{$s_r<\tau s_1$} \Comment{the gate of Algorithm~\ref{alg:deflated_muon}, on magnitudes}
   \State $\mathcal{I}\gets\{i:\abs{\lambda_i}>\tau s_1\}$
   \State $R\gets Z-\sum_{i\in\mathcal{I}}\lambda_iu_iu_i^{\top}$
   \State $\hat{\theta}\gets$ Lanczos-based upper estimate of $\norm{R}_2$
   \State $X_0\gets R/\hat{\theta}+\gamma^{-1}\textstyle\sum_{i\in\mathcal{I}}\operatorname{sign}(\lambda_i)u_iu_i^{\top}$ \Comment{padded initialization, as in Algorithm~\ref{alg:deflated_muon}}
   \State $P\gets
   \half R\bigl(I+g(X_0)\bigr)
   +\textstyle\sum_{i\in\mathcal{I},\,\lambda_i>0}
   \lambda_iu_iu_i^{\top}$ \Comment{deflated projection}
\Else
   \State $\theta\gets$ Lanczos-based upper estimate of $\norm{Z}_2$;\quad
   $X_0\gets Z/\theta$
   \State $P\gets
   \half Z\bigl(I+g(X_0)\bigr)$ \Comment{eq.~\eqref{eq:baseline_projection}}
\EndIf
\State $V\gets\mathrm{CholeskyQR}(\widetilde{Z})$
\State \Return $P$ and the updated basis $V$
\end{algorithmic}
\end{algorithm}

\myParagraph{Use inside ADMM}
We evaluate the deflated projection inside a first-order solver for the standard-form semidefinite program and its dual,
\begin{equation}
\begin{array}{@{}rl@{\qquad\qquad}rl@{}}
\text{Primal:} & \displaystyle\min_{X}\ \inprod{C}{X}
& \text{Dual:} & \displaystyle\max_{y,S}\ b^{\top}y \\[4pt]
& \text{s.t.}\ \mathcal{A}X=b,\ X\in\psd{n},
& & \text{s.t.}\ \mathcal{A}^{*}y+S=C,\ S\in\psd{n},
\end{array}
\label{eq:sdp_pair}
\end{equation}
where $\mathcal{A}:\Sn\to\R^m$ with $\mathcal{A}X=(\inprod{A_1}{X},\ldots,\inprod{A_m}{X})$ and $\mathcal{A}^{*}y=\sum_i y_iA_i$.
A classical three-block ADMM~\citep{wen2010admm} alternates a linear solve for $y$, the projection $S^{(k+1)}=\Pi_{\psd{n}}(Z^{(k+1)})$ with $Z^{(k+1)}:=C-\mathcal{A}^{*}y^{(k+1)}-\sigma^{-1}X^{(k)}$, and a matrix addition for $X$ (the full iteration is Appendix~\ref{app:admm_sdp}, eq.~\eqref{eq:admm_iter}).
For large instances the projection dominates the per-iteration cost~\citep{kang2025psdprojection,groudiev2025cuadmm}; we leave the $y$- and $X$-updates untouched and replace only the projection by Algorithm~\ref{alg:deflation_admm}.

\subsection{Error Analysis}
\label{sec:error_analysis}
\label{subsec:error_bounds}

To isolate the spectral mechanism of deflation, we analyze an idealized setting: the deflated components are exact, and both filters are the classical Newton--Schulz family, of which \eqref{eq:muon_ns} is the degree-five member---the polar iteration composes it $L$ times, and the PSD projection uses the same composition as the sign filter $g$.
In practice the deflated components are estimated, but the estimators of \S\ref{sec:deflated_muon} and \S\ref{sec:deflated_psd} are accurate enough to recover the head singular triplets and eigenpairs: Appendix~\ref{app:brsvd} measures their errors against exact decompositions on the matrices of \S\ref{sec:experiments} and finds them small.
The degree-$(2d+1)$ classical Newton--Schulz map~\citep{bjorck1971iterative} is
\begin{equation}
\mathcal{N}_d(X)
:=
a_0X+a_1(XX^{\top})X+a_2(XX^{\top})^2X+\cdots+a_d(XX^{\top})^dX,
\qquad d\geq1,
\label{eq:general_ns_iteration}
\end{equation}
with the Bj\"orck--Bowie coefficients $a_0,\ldots,a_d$ given explicitly in Appendix~\ref{app:proof_deflated_ns}.
The corresponding iteration is $X_{t+1}=\mathcal{N}_d(X_t)$; the degree-five map \eqref{eq:muon_ns} is the case $d=2$.

\myParagraph{Deflated polar-factor analysis}
Let $A$ denote the input, $M\in\R^{n\times m}$ for the polar factor or $Z\in\Sn$ for the projection, with the normalization scale of \eqref{eq:filter}: $\theta(M):=\norm{M}_F$ as in \eqref{eq:muon_ns} and $\theta(Z):=\norm{Z}_2$ as in \eqref{eq:baseline_projection}.
Let $\sigma_1\geq\cdots\geq\sigma_q>0$ be the nonzero singular values of $A$; for $Z$, $\sigma_i=\abs{\lambda_i}$ with the eigenvalues ordered by magnitude.
Let $X^{\mathrm{base}}:=A/\theta(A)$ be the standard initialization and $\ell:=\sigma_q/\theta(A)$ its spectral floor.
For $1\leq k<q$, let $A=H_k+R_k$ be the exact head split of \eqref{eq:identities}, with $H_k=U_k\Sigma_kV_k^{\top}$ for $M$ and $H_k=U_k\Lambda_kU_k^{\top}$ for $Z$, and for $\gamma\geq1$ let
\begin{equation}
X^{\mathrm{def}}_{\gamma}:=\gamma^{-1}\operatorname{polar}(H_k)+R_k/\theta(R_k),
\qquad
\alpha_k:=\frac{\theta(A)}{\theta(R_k)},
\qquad
\varphi_k:=\min\bigl(\alpha_k\ell,\ \gamma^{-1}\bigr),
\label{eq:deflated_init}
\end{equation}
be the padded deflated initialization of Algorithms~\ref{alg:deflated_muon} and~\ref{alg:deflation_admm}, the tail amplification factor of \eqref{eq:tail_amplification}, and the spectral floor of $X^{\mathrm{def}}_{\gamma}$; here $\operatorname{polar}(H_k)$ is $U_kV_k^{\top}$ for $M$ and $U_k\operatorname{sign}(\Lambda_k)U_k^{\top}$ for $Z$, and $\alpha_k\ell=\sigma_q/\theta(R_k)$ is the floor of the normalized residual.
For $M$, $\alpha_k>1$ always; for $Z$, $\alpha_k=\sigma_1/\sigma_{k+1}$.
The following theorem quantifies the effect of deflation.

\begin{theorem}[Exact deflation raises the spectral floor]
\label{thm:deflated_ns}
Let $M\in\R^{n\times m}$ be a nonzero matrix, and let $X^{\mathrm{base}}$, $X^{\mathrm{def}}_{\gamma}$, $\ell$, $\alpha_k$, and $\varphi_k$ be defined as above with $A=M$.
Then $\sigma(X^{\mathrm{base}})\subset[\ell,1]$ and $\sigma(X^{\mathrm{def}}_{\gamma})\subset[\varphi_k,1]$.
Moreover, if $\gamma^{-1}>\ell$, then for every $d\geq1$ and $L\geq0$ the deflated-to-baseline error ratio satisfies
\begin{equation}
\frac{
\bigl\|
\operatorname{polar}(M)-\mathcal{N}_d^{\circ L}\bigl(X^{\mathrm{def}}_{\gamma}\bigr)
\bigr\|_2
}{
\bigl\|
\operatorname{polar}(M)-\mathcal{N}_d^{\circ L}\bigl(X^{\mathrm{base}}\bigr)
\bigr\|_2
}
\leq
\left(
\frac{1-\varphi_k^2}{1-\ell^2}
\right)^{(d+1)^L}
<1.
\label{eq:deflated_ns_ratio}
\end{equation}
\end{theorem}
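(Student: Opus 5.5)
The plan is to reduce everything to a scalar statement about the Björck--Bowie polynomial $p_d(x)=a_0x+a_1x^3+\cdots+a_dx^{2d+1}$, using the fact that all matrices involved share singular vectors with $M$.

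\emph{Step 1 (spectra).} Let $M=U\Sigma V^{\top}$ be a compact SVD with $H_k$ the leading $k$ triplets. Then $R_k=\sum_{i>k}\sigma_iu_iv_i^{\top}$ has row and column spaces orthogonal to those of $H_k$, and $\operatorname{polar}(H_k)=U_kV_k^{\top}$. Hence
\[
X^{\mathrm{def}}_{\gamma}=\sum_{i\le k}\gamma^{-1}u_iv_i^{\top}+\sum_{i>k}\frac{\sigma_i}{\norm{R_k}_F}u_iv_i^{\top}
\]
is already an SVD with the same singular vectors as $M$. Its nonzero singular values are $\gamma^{-1}$ (for $i\le k$) and $\sigma_i/\norm{R_k}_F\in[\alpha_k\ell,\,\sigma_{k+1}/\norm{R_k}_F]$ (for $i>k$). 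Because $\gamma\ge1$ and $\sigma_{k+1}\le\norm{R_k}_F$, all of them lie in $[\varphi_k,1]$. The analogous and easier observation $\sigma_i/\norm{M}_F\in[\ell,1]$ gives the claim for $X^{\mathrm{base}}$.

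Since $\mathcal{N}_d$ is an odd polynomial in $X$, it acts on each singular value by $p_d$ and keeps the singular vectors. Also $\operatorname{polar}(M)=\sum_i u_iv_i^{\top}$. Hence for $X\in\{X^{\mathrm{base}},X^{\mathrm{def}}_\gamma\}$,
\[
\norm{\operatorname{polar}(M)-\mathcal{N}_d^{\circ L}(X)}_2=\max_{x\in\sigma(X)}\bigl|1-p_d^{\circ L}(x)\bigr|.
\]
This uses that both matrices have exactly $q$ nonzero singular values on the same vectors, so there are no extra error components.

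\emph{Step 2 (scalar facts).} I would establish three properties of $p_d$ on $[0,1]$:
\begin{itemize}
\item[(a)] $p_d$ is increasing and maps $[0,1]$ into $[0,1]$, so $1-p_d^{\circ L}$ is nonnegative and decreasing. The baseline error then equals $1-p_d^{\circ L}(\ell)$, attained at the floor, and the deflated error is at most $1-p_d^{\circ L}(\varphi_k)$.
\item[(b)] A factorization $1-p_d(x)=(1-x^2)^{d+1}c_d(x)$ with $c_d>0$ nonincreasing on $[0,1]$.
\item[(c)] $1+p_d$ is increasing.
\end{itemize}
For (b) I would use that $p_d(x)=x\sum_{j\le d}\binom{2j}{j}4^{-j}(1-x^2)^j$ is the truncated Taylor expansion of $x(1-(1-x^2))^{-1/2}$. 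Then $1-p_d(x)$ has a zero of order $d+1$ in $1-x^2$, i.e.\ order $2d+2$ at $x=1$. For $d=1$ one checks directly that $1-p_1(x)=(1-x^2)^2(2+x)/(2(1+x)^2)$, whose cofactor is decreasing. The general monotonicity of $c_d$ is the step I expect to be the main obstacle. I would try to write $c_d$ via the integral remainder of the binomial series, $1-p_d(x)=x\,(\text{tail sum})+(1-x)$-type terms, or verify $c_d'\le0$ from the recursion in $d$.

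\emph{Step 3 (induction).} Let $e(y)=1-y^2$, $y_t=p_d^{\circ t}(\varphi_k)$, $z_t=p_d^{\circ t}(\ell)$, and $E_t=e(y_t)/e(z_t)$. By (a), $y_t\ge z_t$ for all $t$. Write $e(p(y))=(1-p(y))(1+p(y))$ and apply (b) and (c) at $y_t\ge z_t$. This gives
\[
E_{t+1}\le E_t^{\,d+1}\cdot\frac{c_d(y_t)}{c_d(z_t)}\cdot\frac{1+p(y_t)}{1+p(z_t)}.
\]
Here $c_d(y_t)/c_d(z_t)\le1$ by monotonicity of $c_d$. The factor $(1+p(y_t))/(1+p(z_t))$ is $\ge1$ and would spoil this, so I will instead track $1-y$ directly.

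Set $D_t=(1-y_t)/(1-z_t)$. Then $1-y_{t+1}=(1-y_t)^{d+1}(1+y_t)^{d+1}c_d(y_t)$, so
\[
D_{t+1}=D_t^{\,d+1}\Bigl(\tfrac{1+y_t}{1+z_t}\Bigr)^{d+1}\tfrac{c_d(y_t)}{c_d(z_t)}=E_t^{\,d+1}\,\tfrac{c_d(y_t)}{c_d(z_t)}\le E_t^{\,d+1}.
\]
Similarly $E_{t+1}=D_{t+1}\cdot\frac{1+y_{t+1}}{1+z_{t+1}}$. To close the induction I therefore need $\tilde c_d(x):=c_d(x)(1+p_d(x))$ to be nonincreasing. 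For $d=1$ this is $(2+x)(2+3x-x^3)/(4(1+x)^2)$, which I would check directly. The general case is again the key lemma of Step 2, now in the form ``$e(p_d(x))/e(x)^{d+1}$ is nonincreasing.'' Granting it, $E_{t+1}\le E_t^{\,d+1}$, so $E_L\le E_0^{(d+1)^L}$. Finally $D_L=E_L\cdot\frac{1+z_L}{1+y_L}\le E_L$, which yields \eqref{eq:deflated_ns_ratio}. Strictness follows from $\varphi_k>\ell$, which uses $\alpha_k>1$ and $\gamma^{-1}>\ell$.
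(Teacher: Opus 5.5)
Your architecture is the paper's. You scalarize through the common singular vectors, contract the ratio $E_t=(1-y_t^2)/(1-z_t^2)$ of squared residuals step by step, and pass to $1-y_L$ versus $1-z_L$ only at the end via $(1+z_L)/(1+y_L)\le1$. Your $\tilde c_d(x)=c_d(x)\bigl(1+p_d(x)\bigr)=\bigl(1-p_d(x)^2\bigr)/(1-x^2)^{d+1}$ is exactly the paper's $\omega_d(1-x^2)$. Your identity $E_{t+1}=E_t^{\,d+1}\tilde c_d(y_t)/\tilde c_d(z_t)$ is the one-step identity of Lemma~\ref{lem:one_step_ns_residual} applied at $y_t$ and $z_t$.

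The gap is that the whole analytic content of the theorem is the claim that $\tilde c_d$ is positive and nonincreasing on $[0,1]$ for every $d$. You verify this only for $d=1$ and then grant it. Several other pieces rest on the same unproved computation. Positivity of the cofactor is what makes the denominator $1-p_d^{\circ L}(\ell)$ nonzero. The monotonicity of $p_d$ in your (a) is also only asserted.

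The missing idea is to work with $1-p_d^2$ in the variable $z=1-x^2$, rather than with $1-p_d$ in $x$. Write $p_d(x)=x\,h_d(z)$ with $h_d(z)=\sum_{j\le d}c_jz^j$ and $c_j=4^{-j}\binom{2j}{j}$, so that $p_d(x)^2=(1-z)h_d(z)^2$. The recurrence $2(j+1)c_{j+1}=(2j+1)c_j$ makes $h_d(z)-2(1-z)h_d'(z)$ telescope to $(2d+1)c_dz^d$. This gives at once $p_d'(x)=(2d+1)c_d(1-x^2)^d\ge0$, which settles (a), and
\[
\frac{\mathrm{d}}{\mathrm{d}z}\bigl[1-(1-z)h_d(z)^2\bigr]=(2d+1)c_d\,z^d h_d(z).
\]
Integrating from $0$ yields $1-p_d(x)^2=z^{d+1}\omega_d(z)$ with $\omega_d(z)=(2d+1)c_d\sum_{j=0}^{d}\frac{c_j}{d+j+1}z^j$. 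Because its coefficients are positive, $\omega_d$ is positive and nondecreasing in $z$, i.e.\ nonincreasing in $x$. Setting $x=0$ gives $\omega_d(1)=1$.

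Your suggested route through the binomial remainder does not make this visible. With $r_d(z)=\sum_{j>d}c_jz^j$ one gets $1-(1-z)h_d^2=(1-z)\,r_d(z)\bigl((1-z)^{-1/2}+h_d(z)\bigr)$, in which the decreasing factor $1-z$ competes with the increasing ones.

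Once $\omega_d$ is available, your separate claim (b) and the detour through $D_t$ become unnecessary, since $c_d=\tilde c_d/(1+p_d)$ is then automatically nonincreasing. The proof then closes exactly as you outline.
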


The proof is deferred to Appendix~\ref{app:proof_deflated_ns}, which also bounds the deflated error itself by $(1-\varphi_k^2)^{(d+1)^L}$, eq.~\eqref{eq:deflated_ns_absolute}.
In words, deflation multiplies the spectral floor by $\alpha_k$ (capped by the padding level $\gamma^{-1}$), and the error ratio decays doubly exponentially in $L$; with $\gamma=1$, $\varphi_k=\alpha_k\ell$ and the unpadded case is recovered.

\myParagraph{Deflated PSD-projection analysis}
We now take $A=Z$, so that $\theta(Z)=\norm{Z}_2=\sigma_1$, $\theta(R_k)=\norm{R_k}_2=\sigma_{k+1}$, and $\alpha_k=\sigma_1/\sigma_{k+1}$; we assume $\sigma_{k+1}<\sigma_1$, so that $\alpha_k>1$.
For a symmetric $X$, the map \eqref{eq:general_ns_iteration} applies the odd polynomial $p_d(x)=\sum_{i=0}^{d}a_ix^{2i+1}$ to every eigenvalue, and $p_d^{\circ L}(x)\to1$ on $(0,1]$, so $g=\mathcal{N}_d^{\circ L}$ is a sign filter for \eqref{eq:baseline_projection} and Algorithm~\ref{alg:deflation_admm}.
Write $\widehat{\Pi}^{\mathrm{base}}(Z):=\half Z\bigl(I+\mathcal{N}_d^{\circ L}(X^{\mathrm{base}})\bigr)$ for \eqref{eq:baseline_projection} with this filter and $\theta=\theta(Z)$, and $\widehat{\Pi}^{\mathrm{def}}_{\gamma}(Z):=U_k(\Lambda_k)_{+}U_k^{\top}+\half R_k\bigl(I+\mathcal{N}_d^{\circ L}(X^{\mathrm{def}}_{\gamma})\bigr)$ for the output of Algorithm~\ref{alg:deflation_admm} with this filter and exact inputs.

\begin{proposition}[Deflated PSD projection]
\label{prop:deflated_psd_ns}
Let $Z\in\Sn$ be a nonzero matrix with $\sigma_{k+1}<\sigma_1$, and let $X^{\mathrm{base}}$, $X^{\mathrm{def}}_{\gamma}$, $\ell$, $\alpha_k$, $\widehat{\Pi}^{\mathrm{base}}$, and $\widehat{\Pi}^{\mathrm{def}}_{\gamma}$ be defined as above with $A=Z$.
Then $\sigma(X^{\mathrm{base}})\subset[\ell,1]$ and $\sigma(X^{\mathrm{def}}_{\gamma})\subset\{\gamma^{-1}\}\cup[\alpha_k\ell,1]$, $\widehat{\Pi}^{\mathrm{def}}_{\gamma}(Z)$ does not depend on $\gamma$, and for every $d\geq1$ and $L\geq0$ the deflated-to-baseline error ratio satisfies
\begin{equation}
\frac{
\bigl\|
\widehat{\Pi}^{\mathrm{def}}_{\gamma}(Z)-\Pi_{\psd{n}}(Z)
\bigr\|_2
}{
\bigl\|
\widehat{\Pi}^{\mathrm{base}}(Z)-\Pi_{\psd{n}}(Z)
\bigr\|_2
}
\leq
\left(
\frac{1-\alpha_k^2\ell^2}{1-\ell^2}
\right)^{(d+1)^L}
<1.
\label{eq:deflated_psd_ratio}
\end{equation}
\end{proposition}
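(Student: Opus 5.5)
The plan is to diagonalize everything in one orthonormal eigenbasis of $Z$, reduce both errors to a scalar function of the eigenvalue magnitudes, and then compare the two errors index by index, reusing the scalar Newton--Schulz estimates from the proof of Theorem~\ref{thm:deflated_ns} (Appendix~\ref{app:proof_deflated_ns}). Write $Z=\sum_i\lambda_iu_iu_i^{\top}$ with $\abs{\lambda_1}\ge\cdots$ and $\sigma_i=\abs{\lambda_i}$. Then $H_k=\sum_{i\le k}\lambda_iu_iu_i^{\top}$ and $R_k=\sum_{i>k}\lambda_iu_iu_i^{\top}$, so $\theta(R_k)=\sigma_{k+1}$.

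\textbf{Spectral inclusions and $\gamma$-independence.} The matrix $X^{\mathrm{base}}$ has eigenvalues $\lambda_i/\sigma_1$, whose nonzero magnitudes lie in $[\ell,1]$. The matrix $X^{\mathrm{def}}_{\gamma}$ is diagonal in the same basis. It has eigenvalue $\operatorname{sign}(\lambda_i)\gamma^{-1}$ for $i\le k$ and $\lambda_i/\sigma_{k+1}$ for $i>k$, so the tail magnitudes lie in $[\sigma_q/\sigma_{k+1},1]=[\alpha_k\ell,1]$. This gives the stated inclusion. Since $\mathcal{N}_d^{\circ L}$ acts eigenvalue-wise through the odd polynomial $P:=p_d^{\circ L}$, the matrix $\mathcal{N}_d^{\circ L}(X^{\mathrm{def}}_{\gamma})$ is diagonal in the same basis. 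Multiplying by $R_k$ annihilates the head directions $u_1,\dots,u_k$, so $\tfrac12R_k(I+\mathcal{N}_d^{\circ L}(X^{\mathrm{def}}_{\gamma}))$ depends only on the tail entries. Hence $\widehat{\Pi}^{\mathrm{def}}_{\gamma}(Z)$ is independent of $\gamma$.

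\textbf{Scalar error formulas.} For any odd $P$ and any $x$, set $e(x):=1-P(x)$. Then $\tfrac12\lambda(1+P(\lambda/\theta))-(\lambda)_+=\pm\tfrac12\abs{\lambda}\,e(\abs{\lambda}/\theta)$. Oddness turns the negative-eigenvalue case into the positive one, and $e\ge0$ on $[0,1]$ because $P$ increases to at most $1$. The head is restored exactly, so
\[
\bigl\|\widehat{\Pi}^{\mathrm{def}}_{\gamma}(Z)-\Pi_{\psd{n}}(Z)\bigr\|_2=\max_{i>k}\tfrac12\sigma_i\,e(\sigma_i/\sigma_{k+1}),
\qquad
\bigl\|\widehat{\Pi}^{\mathrm{base}}(Z)-\Pi_{\psd{n}}(Z)\bigr\|_2=\max_{i}\tfrac12\sigma_i\,e(\sigma_i/\sigma_1).
\]
Let $i^\ast>k$ attain the deflated maximum and put $x:=\sigma_{i^\ast}/\sigma_1\ge\ell$. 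Bounding the baseline maximum from below by its $i^\ast$ term gives
\[
\text{ratio}\le\frac{e(\alpha_kx)}{e(x)},\qquad x\in[\ell,\alpha_k^{-1}].
\]
If $x=\alpha_k^{-1}$, the numerator $e(1)$ vanishes and there is nothing to prove, so assume $x<\alpha_k^{-1}$.

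\textbf{The scalar comparison.} It remains to show
\[
\frac{e(\alpha x)}{e(x)}\le\Bigl(\frac{1-\alpha^2x^2}{1-x^2}\Bigr)^{(d+1)^L}\quad\text{for }0<x<\alpha x\le1,
\]
and then observe that $(1-\alpha^2x^2)/(1-x^2)$ is decreasing in $x$, so it is maximized at $x=\ell$. Strict inequality $<1$ follows from $\alpha_k>1$ and $\ell>0$. For the first inequality I would invoke the scalar lemma behind the proof of Theorem~\ref{thm:deflated_ns}, in the following form: the map $y\mapsto e(y)/(1-y^2)^{(d+1)^L}$ is nonincreasing on $(0,1)$, and equivalently $y\mapsto-\log e(y)+(d+1)^L\log(1-y^2)$ is nondecreasing. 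The Theorem~\ref{thm:deflated_ns} ratio bound has exactly the shape $\bigl((1-\varphi_k^2)/(1-\ell^2)\bigr)^{(d+1)^L}$, so that proof must establish this monotonicity or something equivalent.

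\textbf{Main obstacle.} The main obstacle is the monotonicity statement itself if it has to be proved from scratch. For one step I would use the Bj\"orck--Bowie structure $p_d'(t)=C_d(1-t^2)^d$, which gives $e_1(y)=C_d\int_y^1(1-t^2)^d\,dt$. Substituting $t=y+s(1-y)$ should show that $e_1(y)/(1-y^2)^{d+1}$ is nonincreasing. I would then propagate the property through the $L$ compositions using the recursion $1-p_d(P_{t}(y))^2\le(1-P_t(y)^2)^{d+1}$ together with the monotonicity of $p_d$, which accounts for the exponent $(d+1)^L$. The composition step is where I expect the bookkeeping to be delicate.
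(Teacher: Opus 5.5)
Your proposal is correct and follows the paper's proof. It uses the same eigenbasis scalarization with weight $\sigma_i$ on each filtered direction and the same index-by-index comparison of deflated and baseline terms. Your monotonicity of $y\mapsto e(y)/(1-y^2)^{(d+1)^L}$ is exactly the ratio bound \eqref{eq:iterated_ratio_bound} of Lemma~\ref{lem:iterated_ns_residual}, and the decrease of $(1-\alpha^2x^2)/(1-x^2)$ in $x$ then finishes the argument.

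If you were to reprove that lemma, the one-sided recursion $1-p_d(u)^2\le(1-u^2)^{d+1}$ in your fallback cannot control a ratio. The paper instead runs the induction on the squared residual $1-(p_d^{\circ t})^2$ through the exact factorization $1-p_d(u)^2=(1-u^2)^{d+1}\omega_d(1-u^2)$, with $\omega_d$ positive and nondecreasing, and converts to $1-p_d^{\circ L}$ only at the end. The same factorization, via $\omega_d>0$, guarantees $e(x)>0$ for $x<1$, so your denominators are nonzero; you should state this explicitly.
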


The proof (Appendix~\ref{app:deflated_admm_error}) follows the three steps of the proof of Theorem~\ref{thm:deflated_ns}: the error is diagonal in the eigenbasis of $Z$, with entry $-\frac{\sigma_i}{2}\bigl(1-p_d^{\circ L}(\sigma_i/\theta)\bigr)$ in the direction of every eigenvalue that passes through the filter ($\theta$ the scale in use), and Lemma~\ref{lem:iterated_ns_residual} bounds this residual at the two floors.
In words, the projection obeys the ratio bound of Theorem~\ref{thm:deflated_ns} at $\gamma=1$: deflation replaces the condition number of $Z$ by that of the residual.
Two differences from Theorem~\ref{thm:deflated_ns} are worth noting.
First, the padded head is annihilated by $R_k$ and never reaches the output, so the bounds involve the residual floor $\alpha_k\ell$ rather than $\varphi_k$, and no side condition is needed beyond $\alpha_k>1$, which the gate guarantees under exact spectral estimates.
Second, the error in each eigendirection is weighted by $\sigma_i$: small eigenvalues that the filter cannot resolve are cheap for the projection, whereas they dominate the error of the polar factor.
Appendix~\ref{app:deflated_admm_error} also bounds the deflated error itself, eq.~\eqref{eq:deflated_psd_absolute}, and gives complementary bounds for the implemented fixed filter (Proposition~\ref{prop:fixed_filter}) and for an idealized minimax filter regenerated on the deflated interval (Theorem~\ref{thm:deflated_psd_error}).

%!TEX root = ../main.tex

\section{Numerical Experiments}
\label{sec:experiments}

\myParagraph{General setup}
All language-model experiments pretrain GPT-2 Large~\citep{radford2019language} ($n_{\mathrm{embd}}=1280$, $n_{\mathrm{layer}}=36$, $n_{\mathrm{head}}=20$, vocabulary size $50{,}257$, context length $1024$) on subsets of FineWeb~\citep{penedo2024fineweb}, in \texttt{bfloat16} mixed precision on four NVIDIA H200 GPUs; the SDP experiments run on a single H200 with CUDA 12.9.
Muon is applied to all eligible parameters, excluding token embeddings, positional embeddings, and the output embedding layer; the remaining parameters are optimized with AdamW~\citep{loshchilov2019decoupled}.
The learning rate is held constant for the first $40\%$ of training and then decays linearly to zero, following~\citet{jordan2024muon}; the ten-billion-token runs shorten the constant phase to the first $5\%$ for stability.
\S\ref{sec:deflation} analyzes the classical Newton--Schulz coefficients; the experiments use the two practical mappings: Muon's retuned degree-five coefficients $a=3.4445$, $b=-4.7750$, $c=2.0315$ with $L=5$~\citep{jordan2024muon,cesista2025coeffs}, whose larger slope at the origin amplifies small singular values faster, and Polar Express, which selects iteration-dependent degree-five polynomials by a minimax criterion under the same multiplication budget~\citep{amsel2025polar}.
We refer to them as Newton--Schulz and Polar Express below; deflation changes neither mapping's coefficients.
Unless stated otherwise, deflation uses the window ratio $w=0.025$, threshold $\tau=0.1$, and padding $\gamma=1.01$ ($\gamma=1.1$ for the Polar Express mapping).

\subsection{Matrix-Function Accuracy}
\label{subsec:convergence_experiments}

We train GPT-2 Large with Muon on 1B tokens from FineWeb and record the momentum matrices of the four attention projections $\mathrm{Q}$, $\mathrm{K}$, $\mathrm{V}$, $\mathrm{O}$ and the two MLP projections in the middle transformer block (layer $\lfloor N/2\rfloor$ of $N$ layers), at training steps $1$, $500$, $1000$, and $1500$.
For each momentum matrix, we compare four methods with the entries the optimizer uses: Muon's Newton--Schulz iteration, Polar Express, and their deflated counterparts, whose head is estimated by the batched randomized SVD of Algorithm~\ref{alg:brsvd} with the gate and constants of the training runs, not read off an exact SVD.
Using $UV^{\top}$ from a double-precision SVD as the reference polar factor, we measure the relative Frobenius error $\norm{X_L-UV^{\top}}_F/\norm{UV^{\top}}_F$ after every iteration; the polynomial iterations are evaluated in single precision, so that the effects of estimation and of the mapping are not confounded with the \texttt{bfloat16} rounding that all four arms share (Appendix~\ref{app:additional} quantifies the latter).
Figure~\ref{fig:convergence_attn_v} shows that when the momentum matrix has a few dominant singular values, the deflated variants attain the lower error at every iteration. 
Appendix~\ref{app:additional} reports all six recorded matrices (Figures~\ref{fig:convergence_mlp_all} and~\ref{fig:convergence_attn_all}).

\begin{figure}[h]
\begin{center}
\includegraphics[width=0.94\linewidth]{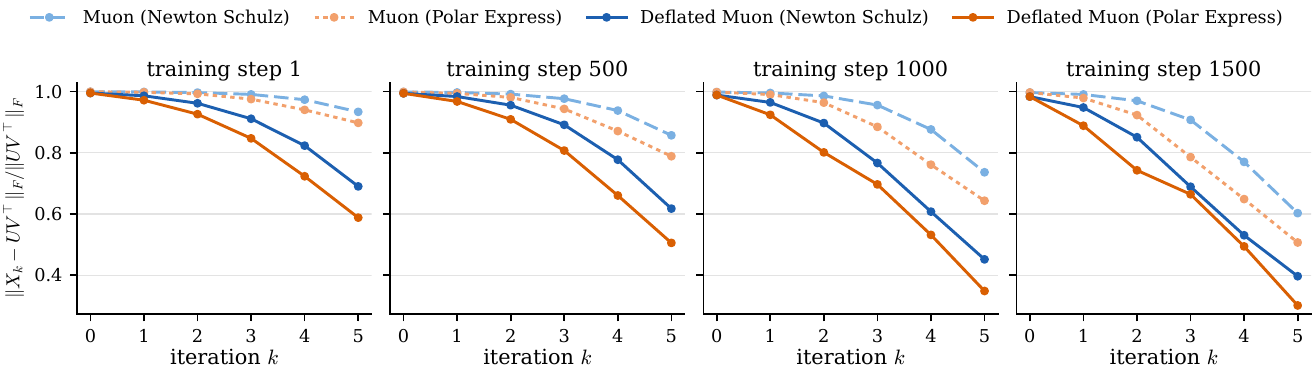}
\end{center}
\caption{Matrix-level accuracy on the attention $\mathrm{V}$ momentum matrix of
GPT-2 Large, recorded at training steps $1$, $500$, $1000$, and $1500$, as a
function of the number of iterations. Each panel reports the relative
error $\norm{X_L-UV^{\top}}_F/\norm{UV^{\top}}_F$.}
\label{fig:convergence_attn_v}
\end{figure}

\subsection{Language-Model Pretraining}
\label{sec:main_experiments}

We integrate \textsc{Deflated Muon} into full pretraining: one epoch over a one-billion-token subset of FineWeb.
Figures~\ref{fig:intro_loss} and~\ref{fig:lr_sweep} report the results. 
In both cases, \textsc{Deflated Muon} achieves better validation loss than Muon, and the improvement is sustained over the whole run. Although the batched randomized SVD and deflation add per-step computation, the advantage persists against wall-clock time.
\textsc{Deflated Muon} attains lower final validation loss than Muon at most learning rates for both mappings, and at every learning rate for Newton--Schulz.

\begin{figure}[h]
\centering
% Widths chosen so that both panels come out the same height: the learning-rate
% panel is 469 x 224 pt (aspect 2.10), the loss panel 441 x 171 pt (aspect 2.58),
% so 0.443/2.10 = 0.547/2.58; the boxes sum to 0.99\linewidth to keep the gap small.
\begin{subfigure}[b]{0.443\linewidth}
\centering
\includegraphics[width=\linewidth]{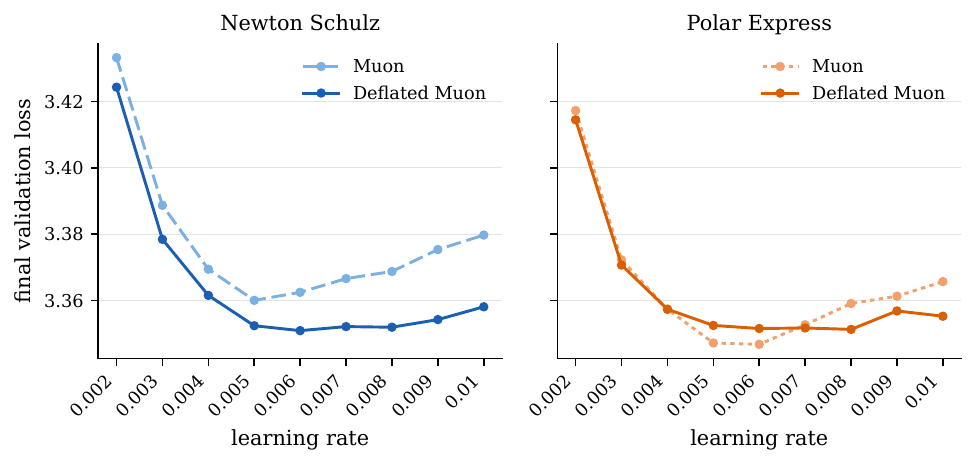}
\caption{Final validation loss across learning rates}
\label{fig:lr_sweep_panel}
\end{subfigure}\hfill
\begin{subfigure}[b]{0.547\linewidth}
\centering
\includegraphics[width=\linewidth]{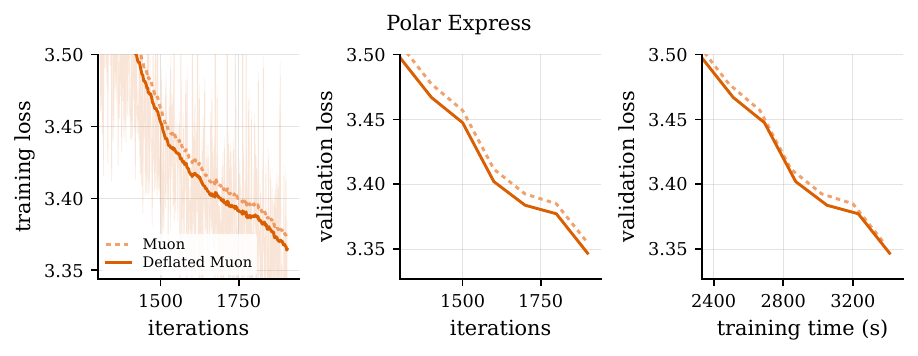}
\caption{Polar Express mapping, learning rate $0.01$}
\label{fig:pe_loss}
\end{subfigure}
\caption{GPT-2 Large trained on one billion tokens from FineWeb.
(a) Final validation loss across learning rates, for both mappings.
(b) Polar Express loss curves at learning rate $0.01$; layout as in Figure~\ref{fig:intro_muon}.}
\label{fig:lr_sweep}
\end{figure}

\myParagraph{Longer training horizon}
To test whether the benefit persists over a longer horizon, we train GPT-2 Large on ten billion tokens, roughly the Chinchilla-optimal budget for this model size~\citep{hoffmann2022training}.
As shown in Figure~\ref{fig:10b_loss}, \textsc{Deflated Muon} continues to outperform the original Muon, reaching a lower final validation loss.

\begin{figure}[H]
\centering
\begin{subfigure}[t]{0.49\linewidth}
\centering
\includegraphics[width=\linewidth]{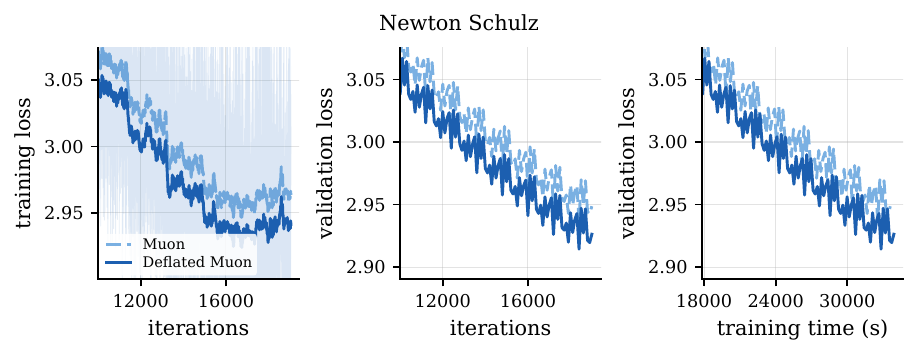}
\caption{Newton--Schulz mapping.}
\label{fig:10b_loss_ns}
\end{subfigure}\hfill
\begin{subfigure}[t]{0.49\linewidth}
\centering
\includegraphics[width=\linewidth]{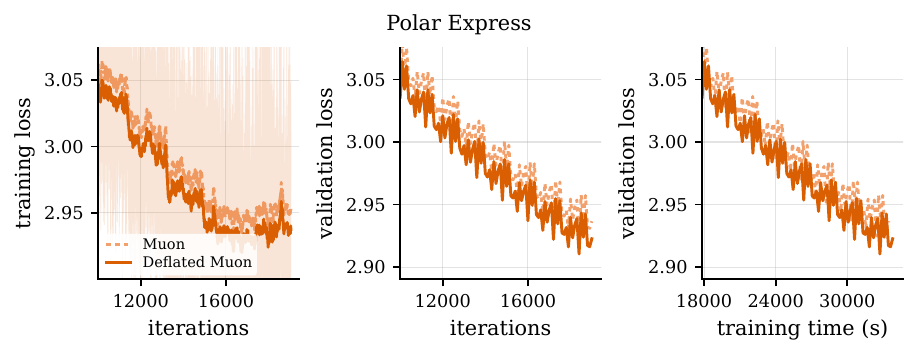}
\caption{Polar Express mapping.}
\label{fig:10b_loss_pe}
\end{subfigure}
\caption{GPT-2 Large trained on ten billion tokens from FineWeb, for the Newton--Schulz (a) and Polar Express (b) mappings; layout as in Figure~\ref{fig:intro_muon}.}
\label{fig:10b_loss}
\end{figure}

\subsection{Semidefinite Programming}
\label{sec:sdp_experiments}

We next evaluate the deflated PSD projection inside ADMM on four large instances from Mittelmann's sparse SDP collection~\citep{mittelmann2006sparse}: the max-cut relaxations~\citep{goemans1995maxcut} \texttt{G55mc} and \texttt{G59mc} ($n=5{,}000$) and \texttt{G60mc} ($n=7{,}000$), and the min-bisection relaxation \texttt{G60\_mb} ($n=7{,}000$).
We run the ADMM iteration \eqref{eq:admm_iter} with $\sigma=1$ for a fixed budget of $10^4$ iterations, with every PSD projection performed in FP16, and compare the factorization-free projection \eqref{eq:baseline_projection} against its deflated version (Algorithm~\ref{alg:deflation_admm}), everything else being identical.
The deflated projection uses $\tau=0.1$, $w=0.025$, $w'=0.05$, and $\gamma=1.1$, with the gate off for the first $100$ iterations while the tracked basis stabilizes.
We monitor the eigenvalue-free part of the maximum KKT residual $\eta$ (definition and end-of-run PSD-feasibility checks in Appendix~\ref{app:admm_setup}).

Figure~\ref{fig:admm_convergence} reports $\eta$ against cumulative projection time: across the four instances, ADMM with the deflated projection converges to a lower KKT residual within a similar projection-time budget.
\begin{figure}[H]
\begin{center}
\includegraphics[width=0.96\linewidth]{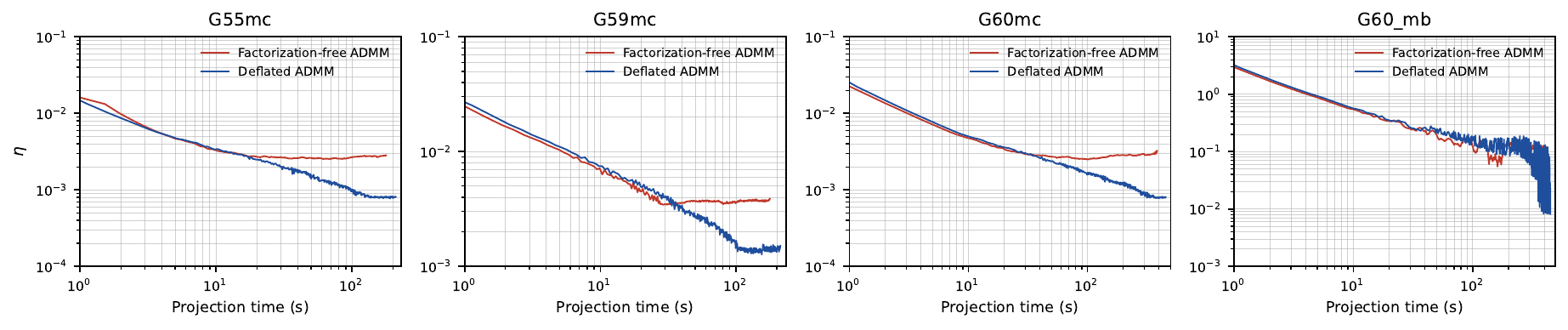}
\end{center}
\caption{KKT residual $\eta$ (the three eigenvalue-free terms of
\eqref{eq:admm_eta}) against cumulative projection time on four instances from
Mittelmann's collection, with the setup of \S\ref{sec:sdp_experiments}. We
compare the factorization-free ADMM (red) with ADMM using the deflated
projection (blue).}
\label{fig:admm_convergence}
\end{figure}

\subsection{Ablations}
\label{sec:ablations}

\myParagraph{Number of polynomial iterations}
We vary the number of iterations over $L\in\{0,1,2,3,4,5\}$ (Table~\ref{tab:ns_iterations}): \textsc{Deflated Muon} is less sensitive to $L$ and maintains low validation loss even with fewer iterations; notably, with Polar Express, $L=3$ deflated iterations (3.3493) already outperform the $L=5$ baseline (3.3671).

\begin{table}[H]
\centering
\footnotesize
\setlength{\tabcolsep}{4pt}
\begin{tabular}{llcccccc}
\toprule
 & & \multicolumn{6}{c}{Number of iterations $L$} \\
\cmidrule(l){3-8}
Mapping & Method & $0$ & $1$ & $2$ & $3$ & $4$ & $5$ \\
\midrule
Newton--Schulz & Muon                   & $4.1879$ & $3.8746$ & $3.6646$ & $3.5201$ & $3.4084$ & $3.3794$ \\
               & \textsc{Deflated Muon} & $\mathbf{4.0898}$ & $\mathbf{3.7895}$ & $\mathbf{3.5870}$ & $\mathbf{3.4400}$ & $\mathbf{3.3681}$ & $\mathbf{3.3568}$ \\
\midrule
Polar Express  & Muon                   & $4.1916$ & $3.7698$ & $3.5122$ & $3.4244$ & $3.3770$ & $3.3671$ \\
               & \textsc{Deflated Muon} & $\mathbf{4.0805}$ & $\mathbf{3.6717}$ & $\mathbf{3.4814}$ & $\mathbf{3.3493}$ & $\mathbf{3.3545}$ & $\mathbf{3.3604}$ \\
\bottomrule
\end{tabular}
\caption{Final validation loss of GPT-2 Large as a function of the number of polynomial iterations $L$.
For Polar Express, with deflation, $L=3$ is already sufficient to reach the best validation loss.}
\label{tab:ns_iterations}
\end{table}
\myParagraph{Momentum coefficient}
We evaluate the momentum coefficient $\beta\in\{0.90,0.95,0.98\}$ on the one-billion-token setup of \S\ref{sec:main_experiments}.
As shown in Table~\ref{tab:momentum}, the improvement from deflation is robust across all three values, and Figure~\ref{fig:mom090_loss} shows the results for $\beta=0.90$ with the Newton--Schulz mapping.
% Table and figure side by side: the table in the left minipage (its caption
% via \captionof, from the caption package that subcaption loads), the loss
% curves in the right one; [c] aligns the two vertically centred.
% The figure is the half-width rendering of figures/momentum_ablation/
% gpt-large_mom0.90_ns_val_loss.pdf (same data, larger fonts relative to
% the panels), produced with
%   python plotting/plot_momentum_ablation.py gpt-large --figwidth 8 --aspect 0.7 --out <dir>
\begin{figure}[H]
\vspace{-4pt}
\setlength{\abovecaptionskip}{3pt}
\setlength{\belowcaptionskip}{0pt}
\begin{minipage}[c]{0.45\linewidth}
\centering
\scriptsize
\setlength{\tabcolsep}{4pt}
% the mapping is a group header row rather than a column, to fit the half width
\begin{tabular}{lccc}
\toprule
 & \multicolumn{3}{c}{Momentum coefficient $\beta$} \\
\cmidrule(l){2-4}
Method & $0.90$ & $0.95$ & $0.98$ \\
\midrule
\multicolumn{4}{l}{\emph{Newton--Schulz}} \\
Muon                   & $3.4205$ & $3.3803$ & $3.3875$ \\
\textsc{Deflated Muon} & $\mathbf{3.3845}$ & $\mathbf{3.3599}$ & $\mathbf{3.3675}$ \\
\midrule
\multicolumn{4}{l}{\emph{Polar Express}} \\
Muon                   & $3.3967$ & $3.3672$ & $3.3786$ \\
\textsc{Deflated Muon} & $\mathbf{3.3776}$ & $\mathbf{3.3576}$ & $\mathbf{3.3785}$ \\
\bottomrule
\end{tabular}
\captionof{table}{Final validation loss of \mbox{GPT-2} Large for momentum coefficients $\beta$; \textsc{Deflated Muon} outperforms Muon at every $\beta$.}
\label{tab:momentum}
\end{minipage}\hfill
\begin{minipage}[c]{0.53\linewidth}
\centering
\includegraphics[width=\linewidth]{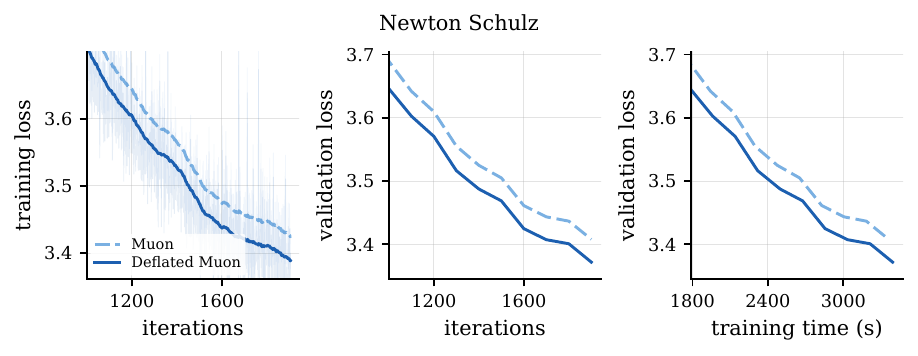}
\caption{GPT-2 Large with momentum coefficient $\beta=0.90$, Newton--Schulz mapping; layout as in Figure~\ref{fig:intro_muon}.}
\label{fig:mom090_loss}
\end{minipage}
\vspace{-8pt}
\end{figure}

\myParagraph{Deflation ratio}
\textsc{Deflated Muon} has two constants of its own: the window ratio $w$, which determines how deep into the spectrum the gate looks, and the threshold $\tau$, which is shared by the gate and the rank cutoff (\S\ref{sec:deflated_muon}).
Starting from the operating point $(w,\tau)=(0.025,0.1)$ used in the main experiments, we test $w\in\{0.0025,0.005,0.01,0.025\}$ with $\tau=0.1$, and $\tau\in\{0.05,0.1,0.2,0.3\}$ with $w=0.025$.
Tables~\ref{tab:gate_w} and~\ref{tab:gate_tau} (Appendix~\ref{app:additional}) show that these settings affect the size of the deflated head: the number of deflated singular pairs decreases as we narrow the window or raise the threshold, while the improvement in validation loss remains robust across different settings.

\myParagraph{Deflation cutoff}
Momentum spectra generally become less head-dominated as training progresses~\citep{magakyan2026spectral}, so deflation may not be needed throughout training; we therefore vary the cutoff over $\{10\%,30\%,50\%,100\%\}$ of the optimizer steps (the main experiments use $100\%$).
Table~\ref{tab:cutoff} (Appendix~\ref{app:additional}) shows that \textsc{Deflated Muon} improves on the baseline at every tested cutoff; a cutoff of $50\%$ or even $30\%$ already substantially improves performance.

%!TEX root = ../main.tex

\section{Conclusion}
\label{sec:conclusion}

We introduced spectral deflation, a general framework that preserves the target exactly while raising the spectral floor of the input and therefore improves the accuracy of polynomial approximations.
Integrated into Muon, \textsc{Deflated Muon} consistently improves GPT-2 pretraining over the corresponding Muon baselines; integrated into factorization-free ADMM for SDP, the deflated PSD projection substantially reduces the KKT residual the solver converges to within a similar projection-time budget.
The framework is orthogonal to the design of the polynomial, and natural next instances are inverse matrix roots in preconditioned optimizers~\citep{gupta2018shampoo} and other spectral functions computed by fixed-depth filters.

%!TEX root = ../main.tex

\subsection*{AI use statement}

In this work, we used generative AI tools for language polishing and code assistance.
We did not use generative AI for research ideation, experimental design, or experimental data generation.
All LLM-generated code was verified and tested for correctness by the authors.
We take full responsibility for the final content of this work, including all text, claims, code, and other artifacts produced with the aid of generative AI.

\subsection*{Reproducibility statement}

The two primitives are fully specified in \S\ref{sec:deflation} (Algorithms~\ref{alg:deflated_muon}--\ref{alg:deflation_admm}), with implementation details in Appendices~\ref{app:brsvd} and~\ref{app:deflated_admm}.
Details of the experiments, including the model architecture, training data, hyperparameters, and evaluation metrics, are provided in \S\ref{sec:experiments} and Appendix~\ref{app:admm_setup}.
Complete proofs of the theoretical results are provided in Appendices~\ref{app:proof_deflated_ns} and~\ref{app:deflated_admm_error}.
Source code for all experiments is available at \url{https://github.com/ComputationalRobotics/Spectral-Deflation}.

\bibliographystyle{plainnat}
\bibliography{references}

% ---------------------------------------------------------------------------
% Appendix.  Each appendix section lives in its own file under sections/.
% ---------------------------------------------------------------------------
\appendix
%!TEX root = ../main.tex

\section{Related Work}
\label{sec:related_work}

\paragraph{Muon and orthogonalized-update optimizers.}
Muon constructs updates for matrix-valued parameters by approximating the polar factors of their momentum matrices with a small, fixed number of Newton--Schulz iterations~\citep{jordan2024muon,liu2025muonscalable}.
\citet{bernstein2024old} interpret the update as steepest descent under spectral-norm geometry, and \citet{chen2025muon} analyze Muon through implicit spectral-norm constraints.
Subsequent work modifies either the output of orthogonalization or the coordinates in which it is performed.
Among post-orthogonalization methods, Muon+ adds row- or column-wise normalization to correct norm imbalance in the polar update~\citep{zhang2026muonplus}, and NorMuon combines orthogonalization with neuron-wise second-moment normalization~\citep{li2025normuon}; AdaMuon is a hybrid, applying a sign-stabilized momentum transformation before orthogonalization and element-wise adaptive scaling after it~\citep{si2025adamuon}.
More broadly, geometry-aware preconditioners motivate working in transformed coordinates: SOAP performs Adam-style adaptation in an eigenbasis obtained from Shampoo preconditioners~\citep{gupta2018shampoo,vyas2024soap}.
Pre-orthogonalization corrections modify the matrix seen by the orthogonalization polynomial: FISMO and Mousse compute spectral updates in curvature-whitened coordinates~\citep{xu2026fismo,zhang2026mousse}, and MuonEq applies lightweight row and column equilibration before Newton--Schulz~\citep{chang2026muoneq}.
These methods deliberately change the update's scaling or geometry, and hence its exact target.
In contrast, exact spectral deflation preserves the original polar target and aims specifically at the finite-step approximation error.

\paragraph{Polar-factor computation.}
The polar factor can be computed accurately through an SVD, but a full factorization for every matrix at every optimizer step is expensive~\citep{higham1986polar}.
Classical iterative alternatives include Newton, inverse Newton, Newton--Schulz, and Halley-type methods for the polar decomposition or the matrix sign function~\citep{higham2008functions,nakatsukasa2010halley,bjorck1971iterative}.
Prioritizing speed over asymptotic accuracy, Muon uses a retuned degree-five Newton--Schulz map for only a few iterations; the retuned coefficients accelerate tail singular values although one is no longer a fixed point of the scalar map~\citep{jordan2024muon,cesista2025coeffs}.
Recent work improves this trade-off by redesigning the polynomials: CANS derives Chebyshev-type coefficients for controlled approximate orthogonalization~\citep{grishina2025cans}, and Polar Express selects iteration-dependent minimax polynomials under a fixed multiplication budget~\citep{amsel2025polar}.
A complementary line reduces the cost of the polar computation itself: \citet{choudhury2026muon} analyze Muon with a randomized low-rank polar approximation, and CacheMuon reuses temporal information across optimizer steps~\citep{dev2026cachemuon}.
Deflation is orthogonal to both lines: it changes the normalized spectrum presented to an existing filter, keeps the full-rank target through the exact identity \eqref{eq:identities}, and we evaluate it with both Newton--Schulz and Polar Express; the temporal reuse of CacheMuon parallels our warm-started tracking on the ADMM side.

\paragraph{ADMM and semidefinite programming.}
ADMM is a widely used operator-splitting framework whose low per-iteration cost and decomposable structure make it suitable for large-scale constrained optimization~\citep{boyd2011admm}.
For semidefinite programming, \citet{wen2010admm} developed an alternating-direction augmented-Lagrangian method in which each iteration consists of linear-system operations and a projection onto the PSD cone; related first-order solvers include the homogeneous self-dual splitting of SCS~\citep{odonoghue2016scs} and ADMM for sparse SDPs~\citep{zheng2017fast}, with recent GPU implementations for large multi-block problems~\citep{groudiev2025cuadmm}.
Because the PSD projection~\citep{higham1988nearest} is conventionally computed by a full eigendecomposition, it often dominates the cost for large dense instances.
Approximate ADMM reduces this cost with partial eigendecompositions~\citep{rontsis2022efficient}, low-rank operator-splitting methods exploit low-rank solution structure~\citep{souto2022lowrank}, and fixed-point methods approximate the projection through the matrix sign function~\citep{francisco2017fixed}.
Most recently, \citet{kang2025psdprojection} designed a factorization-free composite polynomial filter that approximates the matrix ReLU using dense GEMMs and low-precision arithmetic; we retain that filter and improve the normalized spectrum it receives.

\paragraph{Spectral deflation and spectrum-aware filtering.}
Deflation is a classical acceleration technique for linear systems, eigenvalue problems, and matrix-function approximation: selected invariant components are handled explicitly, and an iterative approximation is applied only to their complement.
Deflated conjugate gradient removes the outlying eigenvalues that slow convergence~\citep{saad2000deflated}; deflated restarting accelerates Krylov approximations of matrix functions~\citep{eiermann2011deflated}; and deflated rational Krylov methods for the matrix sign function explicitly treat eigenvalues near the function's discontinuity~\citep{bloch2009krylov}.
Other approaches adapt the approximation to the spectrum instead: spectrum-adapted polynomial methods estimate the spectral density and allocate accuracy to densely populated regions~\citep{fan2018spectrum}.
We adapt the deflation principle to short, dense-GEMM polynomial filters, where the dominant components are removed not because they impede a Krylov expansion but because they inflate the normalization scale of the fixed-depth filter.

%!TEX root = ../main.tex

\section{Proof of Theorem~\ref{thm:deflated_ns}}
\label{app:proof_deflated_ns}

We restate the theorem for self-containment, adding the bound on the deflated error itself that the proof yields alongside the ratio bound.

\begin{theoremrestated}
Let $M$, $X^{\mathrm{base}}$, $X^{\mathrm{def}}_{\gamma}$, $\ell$, $\alpha_k$, and $\varphi_k=\min(\alpha_k\ell,\gamma^{-1})$ be as defined in \S\ref{subsec:error_bounds}, with $1\leq k<q$ and $\gamma\geq1$.
Then $\sigma(X^{\mathrm{base}})\subset[\ell,1]$ and $\sigma(X^{\mathrm{def}}_{\gamma})\subset[\varphi_k,1]$.
Moreover, for every $d\geq1$ and $L\geq0$,
\begin{equation}
\bigl\|
\operatorname{polar}(M)
-\mathcal{N}_d^{\circ L}(X^{\mathrm{def}}_{\gamma})
\bigr\|_2
\leq
\bigl(1-\varphi_k^2\bigr)^{(d+1)^L}.
\label{eq:deflated_ns_absolute}
\end{equation}
If in addition $\gamma^{-1}>\ell$, the deflated-to-baseline error ratio satisfies
\[
\frac{
\bigl\|
\operatorname{polar}(M)
-\mathcal{N}_d^{\circ L}(X^{\mathrm{def}}_{\gamma})
\bigr\|_2
}{
\bigl\|
\operatorname{polar}(M)
-\mathcal{N}_d^{\circ L}(X^{\mathrm{base}})
\bigr\|_2
}
\leq
\left(
\frac{1-\varphi_k^2}{1-\ell^2}
\right)^{(d+1)^L}
<1.
\]
\end{theoremrestated}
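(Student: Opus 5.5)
Everything reduces to a scalar problem, because both initializations share the singular vectors of $M$. Write the compact SVD $M=\sum_{i=1}^q\sigma_iu_iv_i^{\top}$. Then $X^{\mathrm{base}}=\sum_i(\sigma_i/\norm{M}_F)u_iv_i^{\top}$, and since $\operatorname{polar}(H_k)=\sum_{i\le k}u_iv_i^{\top}$ and $R_k=\sum_{i>k}\sigma_iu_iv_i^{\top}$,
\[
X^{\mathrm{def}}_{\gamma}=\sum_{i\le k}\gamma^{-1}u_iv_i^{\top}+\sum_{i>k}\bigl(\sigma_i/\norm{R_k}_F\bigr)u_iv_i^{\top}.
\]

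\textbf{Spectral inclusions.} For the baseline, $\sigma_q/\norm{M}_F=\ell$ and $\sigma_1\le\norm{M}_F$, so $\sigma(X^{\mathrm{base}})\subset[\ell,1]$. For the deflated initialization, the tail values lie in $[\sigma_q/\norm{R_k}_F,\,\sigma_{k+1}/\norm{R_k}_F]\subset[\alpha_k\ell,1]$, and the head values equal $\gamma^{-1}\in(0,1]$. Both are at least $\varphi_k$, which gives $\sigma(X^{\mathrm{def}}_{\gamma})\subset[\varphi_k,1]$.

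\textbf{Reduction to a scalar bound.} Since $\mathcal{N}_d$ acts on singular values through the odd polynomial $p_d(x)=\sum_i a_ix^{2i+1}$, any $X=\sum_i x_iu_iv_i^{\top}$ with $x_i\in(0,1]$ satisfies
\[
\operatorname{polar}(M)-\mathcal{N}_d^{\circ L}(X)=\sum_i\bigl(1-p_d^{\circ L}(x_i)\bigr)u_iv_i^{\top}.
\]
Its spectral norm is $\max_i\abs{1-p_d^{\circ L}(x_i)}$.

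\textbf{Scalar lemma.} The core step is a lemma on $[0,1]$: $1-p_d(x)=(1-x^2)^{d+1}\cdot s(x)$ for a nonnegative polynomial factor $s$, and iterating gives
\[
0\le 1-p_d^{\circ L}(x)\le(1-x^2)^{(d+1)^L}.
\]
This is presumably the paper's Lemma~\ref{lem:iterated_ns_residual}. The Björck--Bowie coefficients come from truncating the binomial series of $x(1-(1-x^2))^{-1/2}$. Hence
\[
1-p_d(x)^2=(1-x^2)^{d+1}q(x)
\]
with $q\ge0$ and $q\le1$ on $[0,1]$; the classical identity is $1-p_d(x)^2=(1-x^2)^{d+1}\cdot(\text{polynomial with positive coefficients in }1-x^2)$, bounded by $1$.

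Next, $p_d$ maps $[0,1]$ monotonically into $[0,1]$ with $p_d(x)\ge x$. Setting $y=1-x^2$, this gives $1-p_d(x)^2\le y^{d+1}$. Induction on $L$ then yields $1-(p_d^{\circ L}(x))^2\le(1-x^2)^{(d+1)^L}$. Finally, $1-p\le1-p^2$ for $p\in[0,1]$, which gives the stated bound.

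The main obstacle is verifying $q\le1$ on $[0,1]$. I would get it from the representation $p_d'(x)=c_d(1-x^2)^d$ with $c_d>0$. This makes $p_d$ increasing with $p_d(1)=1$, so $p_d\in[0,1]$, and $1-p_d(x)=\int_x^1c_d(1-t^2)^d\,dt$. From there, a direct comparison with $(1-x^2)^{d+1}$, or working with $1-p_d^2$, should close the estimate.

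\textbf{Absolute bound.} Since $1-p_d^{\circ L}$ is decreasing in $x$ and every deflated singular value is at least $\varphi_k$, the deflated error is at most $(1-\varphi_k^2)^{(d+1)^L}$. This is \eqref{eq:deflated_ns_absolute}.

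\textbf{Ratio bound.} For the denominator I need a matching lower bound at $x=\ell$. The baseline error is at least $1-p_d^{\circ L}(\ell)$, so the ratio bound needs
\[
1-p_d^{\circ L}(\ell)\ \gtrsim\ (1-\ell^2)^{(d+1)^L}.
\]
Literally, a lower bound of the form $(1-\ell^2)^{(d+1)^L}$ is false for small $\ell$ and $L=1$. So I would instead compare numerator and denominator through the same monotone function $E(x)=1-p_d^{\circ L}(x)$ and use the ratio $E(\varphi_k)/E(\ell)$. If the paper's lemma gives the two-sided form in terms of $1-(p_d^{\circ L})^2$ (a composition of exact identities $1-p_d^2=(1-x^2)^{d+1}q$), I would bound the ratio of products of $q$-factors by $1$ using monotonicity of $q$ in $y$. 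I flag this ratio step as the delicate part; strictness $<1$ follows from $\varphi_k>\ell$, which holds because $\alpha_k>1$ and $\gamma^{-1}>\ell$.
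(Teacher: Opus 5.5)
Your route is the same as the paper's. You diagonalize both initializations in the SVD basis of $M$ and work with the squared residual $1-p_d(x)^2=(1-x^2)^{d+1}\omega_d(1-x^2)$ (your $q$). You iterate it for the absolute bound, and for the ratio bound you compare the two residual recursions through the monotonicity of $\omega_d$. You are also right that no lower bound $1-p_d^{\circ L}(\ell)\ge(1-\ell^2)^{(d+1)^L}$ is available, which is why the ratio must be handled this way. As written, however, the technical core is asserted rather than proved, and a few steps are off.

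\textbf{The unsquared factorization.} Your opening claim $1-p_d(x)=(1-x^2)^{d+1}s(x)$ with $s$ a polynomial is false. Since $p_d$ is odd with $p_d(1)=1$, we have $1-p_d(-1)=2\neq0$, so $(1+x)^{d+1}$ cannot divide $1-p_d$. For $d=1$, for instance, $1-p_1(x)=\tfrac12(1-x)^2(x+2)$. This is harmless, because only the squared residual factors this way and you switch to it.

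\textbf{The bound $\omega_d\le1$.} The inequality $p_d(x)\ge x$ does not yield $1-p_d(x)^2\le(1-x^2)^{d+1}$; that needs $\omega_d\le1$, which you leave open. It closes along the route you suggest. Write $z=1-x^2$ and $p_d(x)=x\,h_d(z)$ with $h_d(z)=\sum_{j\le d}c_jz^j$, $c_j=4^{-j}\binom{2j}{j}$. Your formula $p_d'(x)=(2d+1)c_d(1-x^2)^d$ then reads $h_d(z)-2(1-z)h_d'(z)=(2d+1)c_dz^d$. Hence $\frac{\mathrm{d}}{\mathrm{d}z}\bigl[1-(1-z)h_d(z)^2\bigr]=(2d+1)c_d\,z^dh_d(z)$, and integrating from $0$ shows that $\omega_d$ is a polynomial in $z$ with positive coefficients. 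So $\omega_d$ is positive and nondecreasing on $[0,1]$. Evaluating the identity at $x=0$ gives $\omega_d(1)=1$, hence $0<\omega_d\le1$. Positivity and monotonicity are exactly what your ratio step uses as well.

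\textbf{From squared to unsquared residuals.} Your ratio argument controls $\bigl(1-P(\varphi_k)^2\bigr)/\bigl(1-P(\ell)^2\bigr)$ with $P:=p_d^{\circ L}$, whereas the matrix errors are $1-P(\varphi_k)$ and $1-P(\ell)$. The paper passes between them via
\[
\frac{1-P(y)}{1-P(x)}=\frac{1-P(y)^2}{1-P(x)^2}\cdot\frac{1+P(x)}{1+P(y)},
\]
where the last factor is at most $1$ because $P$ is nondecreasing and $x=\ell\le y=\varphi_k$.

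\textbf{Positivity of the denominator.} The paper also checks that the denominator is positive. First, $\ell<1$, because $1\le k<\operatorname{rank}M$ forces at least two nonzero singular values, so $\norm{M}_F$ strictly exceeds the smallest one. Second, the recursion with $\omega_d>0$ keeps $1-P(\ell)^2>0$ at every step.

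With these additions, your proof coincides with the paper's.
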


We prove the theorem in three steps.
First, two scalar lemmas describe how one Newton--Schulz step, and then $L$ steps, act on a squared singular value.
Second, we reduce the matrix error to a scalar quantity at the spectral floor.
Third, we apply the lemmas at the floors $\ell$ and $\varphi_k$.
The first lemma gives an exact formula for the one-step squared-singular-value residual, and the second iterates it.
Throughout, the Bj\"orck--Bowie coefficients of \eqref{eq:general_ns_iteration} are $a_i:=(-1)^i\sum_{j=i}^{d}\frac{1}{4^j}\binom{2j}{j}\binom{j}{i}$ for $i=0,\ldots,d$.

\begin{lemma}[One-step residual identity]
\label{lem:one_step_ns_residual}
For $d\geq1$, let
$c_j:=4^{-j}\binom{2j}{j}$ and
$h_d(z):=\sum_{j=0}^d c_jz^j$, and let
$p_d(x):=\sum_{i=0}^d a_i x^{2i+1}=x\,h_d(1-x^2)$
be the scalar polynomial induced by $\mathcal{N}_d$. Then $p_d$ is
nondecreasing and maps $[0,1]$ into itself. Moreover,
\begin{equation}
1-p_d(x)^2
=
(1-x^2)^{d+1}\omega_d(1-x^2),
\label{eq:one_step_ns_residual}
\end{equation}
where
\begin{equation}
\omega_d(z)
:=
(2d+1)c_d
\sum_{j=0}^d\frac{c_j}{d+j+1}z^j.
\label{eq:omega_d_definition}
\end{equation}
The function $\omega_d$ is positive and nondecreasing on $[0,1]$ and
satisfies $0<\omega_d(z)\leq1$ for every $z\in[0,1]$. Consequently,
\begin{equation}
0\leq 1-p_d(x)^2\leq(1-x^2)^{d+1}
\qquad\text{for every }x\in[0,1].
\label{eq:one_step_ns_residual_bound}
\end{equation}
\end{lemma}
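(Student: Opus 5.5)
The plan is to work in the variable $z:=1-x^2\in[0,1]$ and derive everything from one derivative identity for $p_d$. That identity rests on the recursion of the coefficients $c_j=4^{-j}\binom{2j}{j}$ (the Taylor coefficients of $(1-z)^{-1/2}$), namely
\[
c_{j+1}=\frac{2j+1}{2j+2}\,c_j,\qquad\text{equivalently}\qquad 2(j+1)c_{j+1}=(2j+1)c_j .
\]
I would first note that $p_d(x)=x\,h_d(1-x^2)$ agrees with $\sum_i a_ix^{2i+1}$. This follows by expanding $(1-x^2)^j$ binomially and comparing with the stated Bj\"orck--Bowie formula for $a_i$, so it is a routine check.

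\textbf{Step 1 (derivative identity, the key step).} Differentiate $p_d(x)=x\,h_d(z)$ with $dz/dx=-2x$:
\[
p_d'(x)=h_d(z)-2x^2h_d'(z)=h_d(z)-2(1-z)h_d'(z).
\]
The coefficient of $z^j$ on the right is $c_j-2(j+1)c_{j+1}+2jc_j$. For $j<d$ the recursion makes this equal to $(2j+1)c_j-(2j+1)c_j=0$. For $j=d$ the $c_{d+1}$ term is absent, so the coefficient is $(2d+1)c_d$. The sum therefore telescopes to
\[
p_d'(x)=(2d+1)c_d\,(1-x^2)^d\;\geq 0 .
\]
Hence $p_d$ is nondecreasing. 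Since $p_d(0)=0$ and $p_d(1)=h_d(0)=c_0=1$, it maps $[0,1]$ into $[0,1]$. I expect this coefficient bookkeeping to be the only real obstacle; everything else follows mechanically.

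\textbf{Step 2 (residual identity).} Set $F(z):=1-p_d(x)^2=1-(1-z)h_d(z)^2$. By the chain rule,
\[
\frac{dF}{dz}=\frac{-2p_d(x)\,p_d'(x)}{-2x}=h_d(z)\,(2d+1)c_d z^d .
\]
At $z=0$ we have $x=1$, $p_d=1$, so $F(0)=0$. Integrating termwise from $0$ gives
\[
F(z)=(2d+1)c_d\sum_{j=0}^d \frac{c_j}{d+j+1}\,z^{d+j+1}=z^{d+1}\omega_d(z),
\]
which is exactly \eqref{eq:one_step_ns_residual} with $\omega_d$ as in \eqref{eq:omega_d_definition}.

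\textbf{Step 3 (properties of $\omega_d$ and the bound).} All coefficients of $\omega_d$ are positive, so $\omega_d$ is positive and nondecreasing on $[0,1]$. Its maximum on $[0,1]$ is therefore $\omega_d(1)=F(1)$. At $z=1$ we have $x=0$ and $p_d(0)=0$, so $F(1)=1$ and $0<\omega_d\leq1$. Finally, $0\leq p_d\leq1$ on $[0,1]$ gives $F\geq0$, and $\omega_d\leq1$ gives $F\leq(1-x^2)^{d+1}$, which is \eqref{eq:one_step_ns_residual_bound}.
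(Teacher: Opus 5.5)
Your proposal is correct and follows essentially the same route as the paper: the recurrence $2(j+1)c_{j+1}=(2j+1)c_j$ gives $p_d'(x)=(2d+1)c_d(1-x^2)^d$, integrating $F'(z)=(2d+1)c_d z^d h_d(z)$ from $F(0)=0$ gives the residual identity, and $\omega_d(1)=F(1)=1$ together with the positive coefficients of $\omega_d$ finishes the argument. The only difference is cosmetic: you compute $F'(z)$ by the chain rule through $x$, which divides by $x$ and so holds for $z<1$ and extends to $z=1$ because both sides are polynomials in $z$. The paper instead differentiates $1-(1-z)h_d(z)^2$ directly in $z$ and factors out $h_d(z)$.
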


\begin{proof}
We first differentiate $p_d$. The coefficients $c_j$ satisfy
$c_{j+1}/c_j=\tfrac14\binom{2j+2}{j+1}/\binom{2j}{j}=\tfrac{2j+1}{2j+2}$,
and hence
\begin{equation}
2(j+1)c_{j+1}=(2j+1)c_j.
\label{eq:central_binomial_recurrence}
\end{equation}
Set $z=1-x^2$. By the chain rule,
$p_d'(x)=h_d(z)-2(1-z)h_d'(z)$. Expanding the right-hand side gives
\begin{align*}
p_d'(x)
&=
\sum_{j=0}^d c_jz^j
-2(1-z)\sum_{j=1}^d jc_jz^{j-1} \\
&=
(c_0-2c_1)
+\sum_{j=1}^{d-1}
\bigl((2j+1)c_j-2(j+1)c_{j+1}\bigr)z^j
+(2d+1)c_dz^d.
\end{align*}
Because $c_0=1$, $c_1=1/2$, and
\eqref{eq:central_binomial_recurrence} holds, every term except the
last one vanishes. Thus $p_d'(x)=(2d+1)c_d(1-x^2)^d$. In particular, $p_d'(x)\geq0$ on $[0,1]$. Since $p_d(0)=0$ and
$p_d(1)=1$, it follows that $p_d([0,1])\subset[0,1]$.

It remains to prove the residual identity. For $z\in[0,1]$, direct
differentiation and the formula for $p_d'$ give
\begin{align*}
\frac{\mathrm{d}}{\mathrm{d}z}
\bigl[1-(1-z)h_d(z)^2\bigr]
&=
h_d(z)^2-2(1-z)h_d(z)h_d'(z) \\
&=
h_d(z)\bigl[h_d(z)-2(1-z)h_d'(z)\bigr] \\
&=
(2d+1)c_dz^dh_d(z).
\end{align*}
Because $h_d(0)=c_0=1$, the expression inside the derivative is zero
at $z=0$. Integrating from $0$ to $z$ therefore yields
\begin{align*}
1-(1-z)h_d(z)^2
&=
(2d+1)c_d\int_0^z u^dh_d(u)\,\mathrm{d}u \\
&=
(2d+1)c_d
\sum_{j=0}^d c_j\int_0^z u^{d+j}\,\mathrm{d}u \\
&=
(2d+1)c_d
\sum_{j=0}^d\frac{c_j}{d+j+1}z^{d+j+1} \\
&=
z^{d+1}\omega_d(z).
\end{align*}
Substituting $z=1-x^2$ and using
$p_d(x)^2=x^2h_d(1-x^2)^2$ proves
\eqref{eq:one_step_ns_residual}.

Every coefficient of $\omega_d$ is positive. Hence $\omega_d(z)>0$
and $\omega_d$ is nondecreasing on $[0,1]$. Taking $x=0$ in
\eqref{eq:one_step_ns_residual} gives $\omega_d(1)=1$. It follows that
$0<\omega_d(z)\leq1$ for every $z\in[0,1]$.
Combining this inequality with \eqref{eq:one_step_ns_residual} proves
\eqref{eq:one_step_ns_residual_bound}.
\end{proof}

\begin{lemma}[Iterated squared-residual bounds]
\label{lem:iterated_ns_residual}
For an integer $t\geq0$, let $p_d^{\circ t}$ denote the $t$-fold
composition of $p_d$, with $p_d^{\circ0}$ the identity. Then, for every
$x\in[0,1]$ and every $L\geq0$,
\begin{equation}
1-p_d^{\circ L}(x)
\leq
1-\bigl(p_d^{\circ L}(x)\bigr)^2
\leq
(1-x^2)^{(d+1)^L}.
\label{eq:iterated_absolute_bound}
\end{equation}

Moreover, if $0<x\leq y\leq1$ and $x<1$, then
\begin{equation}
\frac{1-p_d^{\circ L}(y)}{1-p_d^{\circ L}(x)}
\leq
\left(
\frac{1-y^2}{1-x^2}
\right)^{(d+1)^L}.
\label{eq:iterated_ratio_bound}
\end{equation}
\end{lemma}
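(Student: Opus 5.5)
Both bounds follow from Lemma~\ref{lem:one_step_ns_residual} applied along the orbit of $x$. The natural variable is $z_t:=1-(p_d^{\circ t}(x))^2$, which lies in $[0,1]$ because $p_d$ maps $[0,1]$ into itself.

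\emph{Absolute bound.} The first inequality in \eqref{eq:iterated_absolute_bound} is immediate: for $u=p_d^{\circ L}(x)\in[0,1]$ we have $1-u\le(1-u)(1+u)=1-u^2$. For the second inequality, apply \eqref{eq:one_step_ns_residual_bound} at the point $p_d^{\circ t}(x)$ to get $z_{t+1}\le z_t^{d+1}$. Inducting on $t$ then gives $z_L\le z_0^{(d+1)^L}=(1-x^2)^{(d+1)^L}$, using that $z\mapsto z^{d+1}$ is nondecreasing on $[0,1]$.

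\emph{Ratio bound.} I would first prove the squared version
\[
\frac{1-(p_d^{\circ L}(y))^2}{1-(p_d^{\circ L}(x))^2}\le\Bigl(\frac{1-y^2}{1-x^2}\Bigr)^{(d+1)^L}
\]
by induction on $L$. Set $x_t:=p_d^{\circ t}(x)$ and $y_t:=p_d^{\circ t}(y)$. Since $p_d$ is nondecreasing, $x_t\le y_t$, so $1-y_t^2\le 1-x_t^2$. We also need $x_t<1$, so that the denominator never vanishes. This holds because $p_d'(x)=(2d+1)c_d(1-x^2)^d>0$ on $[0,1)$, so $p_d$ is strictly increasing on $[0,1]$ with $p_d(1)=1$; hence $p_d(x)<1$ whenever $x<1$.

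The one-step comparison uses the exact identity \eqref{eq:one_step_ns_residual}:
\[
\frac{1-y_{t+1}^2}{1-x_{t+1}^2}=\Bigl(\frac{1-y_t^2}{1-x_t^2}\Bigr)^{d+1}\frac{\omega_d(1-y_t^2)}{\omega_d(1-x_t^2)}.
\]
The last factor is at most $1$, because $\omega_d$ is positive and nondecreasing and $1-y_t^2\le1-x_t^2$. Since the base ratio lies in $[0,1]$, raising the induction hypothesis to the power $d+1$ preserves the inequality, and this closes the induction.

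\emph{Main obstacle: removing the squares.} The unsquared ratio satisfies
\[
\frac{1-y_L}{1-x_L}=\frac{1-y_L^2}{1-x_L^2}\cdot\frac{1+x_L}{1+y_L},
\]
and $(1+x_L)/(1+y_L)\le1$ because $x_L\le y_L$, so the unsquared ratio is bounded by the squared one, which is what \eqref{eq:iterated_ratio_bound} requires. The edge case $y=1$ needs no separate treatment: then $y_L=1$, the numerator vanishes, and the bound holds trivially.

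The delicate point is the correct use of the exact identity \eqref{eq:one_step_ns_residual}, specifically the monotonicity of $\omega_d$. The crude bound \eqref{eq:one_step_ns_residual_bound} controls the numerator only from above, whereas the ratio also requires a lower bound on the denominator.
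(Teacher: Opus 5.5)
Your proposal is correct and follows the paper's proof essentially step for step: you iterate the one-step bound for the absolute estimate, use the exact identity \eqref{eq:one_step_ns_residual} with the monotonicity of $\omega_d$ for the squared ratio, and remove the squares via $(1+x_L)/(1+y_L)\le 1$. The only small difference is that you certify $x_t<1$ through strict monotonicity of $p_d$, using $p_d'(x)=(2d+1)c_d(1-x^2)^d$ from the proof of Lemma~\ref{lem:one_step_ns_residual}; the paper instead obtains $1-x_t^2>0$ inductively from the recursion and $\omega_d>0$, and both routes are valid.
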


\begin{proof}
Lemma~\ref{lem:one_step_ns_residual} shows that $p_d$ maps $[0,1]$
into itself, so $p_d^{\circ t}(x)\in[0,1]$ for every $t\geq0$.
Applying \eqref{eq:one_step_ns_residual} to $p_d^{\circ t}(x)$ and
writing $p_d^{\circ(t+1)}(x)=p_d\bigl(p_d^{\circ t}(x)\bigr)$ gives
\begin{equation}
1-\bigl(p_d^{\circ(t+1)}(x)\bigr)^2
=
\Bigl(1-\bigl(p_d^{\circ t}(x)\bigr)^2\Bigr)^{d+1}
\omega_d\Bigl(1-\bigl(p_d^{\circ t}(x)\bigr)^2\Bigr).
\label{eq:iterated_residual_recursion}
\end{equation}
Because $0<\omega_d(z)\leq1$ on $[0,1]$,
\begin{equation}
1-\bigl(p_d^{\circ(t+1)}(x)\bigr)^2
\leq
\Bigl(1-\bigl(p_d^{\circ t}(x)\bigr)^2\Bigr)^{d+1}.
\label{eq:residual_power_recursion}
\end{equation}
At $t=0$, we have
$1-\bigl(p_d^{\circ0}(x)\bigr)^2=1-x^2$. Induction using
\eqref{eq:residual_power_recursion} therefore yields
\begin{equation}
1-\bigl(p_d^{\circ L}(x)\bigr)^2
\leq
(1-x^2)^{(d+1)^L}
\qquad\text{for every }L\geq0.
\label{eq:rL_absolute_bound}
\end{equation}
Furthermore, since $p_d^{\circ L}(x)\in[0,1]$,
$1-p_d^{\circ L}(x)=\bigl(1-(p_d^{\circ L}(x))^2\bigr)/\bigl(1+p_d^{\circ L}(x)\bigr)\leq1-\bigl(p_d^{\circ L}(x)\bigr)^2$.
Together with \eqref{eq:rL_absolute_bound}, this proves
\eqref{eq:iterated_absolute_bound}.

We next prove the ratio estimate. By
Lemma~\ref{lem:one_step_ns_residual}, $p_d$ is nondecreasing on
$[0,1]$, and hence so is $p_d^{\circ t}$. Since $x\leq y$ and both
iterates lie in $[0,1]$,
\begin{equation}
p_d^{\circ t}(x)\leq p_d^{\circ t}(y)
\qquad\text{and}\qquad
0\leq 1-\bigl(p_d^{\circ t}(y)\bigr)^2
\leq 1-\bigl(p_d^{\circ t}(x)\bigr)^2
\qquad\text{for every }t\geq0.
\label{eq:residual_ordering}
\end{equation}
Because $x<1$, we have $1-x^2>0$, and since $\omega_d$ is positive,
\eqref{eq:iterated_residual_recursion} shows inductively that
$1-\bigl(p_d^{\circ t}(x)\bigr)^2>0$ for every finite $t$. Hence the
following ratios are well-defined. Applying
\eqref{eq:iterated_residual_recursion} to $x$ and to $y$, and using the
monotonicity of $\omega_d$ together with \eqref{eq:residual_ordering},
we obtain
\begin{align*}
\frac{1-\bigl(p_d^{\circ(t+1)}(y)\bigr)^2}
     {1-\bigl(p_d^{\circ(t+1)}(x)\bigr)^2}
&=
\left(
\frac{1-\bigl(p_d^{\circ t}(y)\bigr)^2}
     {1-\bigl(p_d^{\circ t}(x)\bigr)^2}
\right)^{d+1}
\frac{\omega_d\Bigl(1-\bigl(p_d^{\circ t}(y)\bigr)^2\Bigr)}
     {\omega_d\Bigl(1-\bigl(p_d^{\circ t}(x)\bigr)^2\Bigr)}
\\
&\leq
\left(
\frac{1-\bigl(p_d^{\circ t}(y)\bigr)^2}
     {1-\bigl(p_d^{\circ t}(x)\bigr)^2}
\right)^{d+1}.
\end{align*}
Induction on $t$ now gives
\begin{equation}
\frac{1-\bigl(p_d^{\circ L}(y)\bigr)^2}
     {1-\bigl(p_d^{\circ L}(x)\bigr)^2}
\leq
\left(
\frac{1-y^2}{1-x^2}
\right)^{(d+1)^L}.
\label{eq:rL_ratio_bound}
\end{equation}

Finally, \eqref{eq:residual_ordering} gives
$p_d^{\circ L}(x)\leq p_d^{\circ L}(y)$. Rationalizing the two
differences therefore gives
\[
\frac{1-p_d^{\circ L}(y)}{1-p_d^{\circ L}(x)}
=
\frac{1-\bigl(p_d^{\circ L}(y)\bigr)^2}
     {1-\bigl(p_d^{\circ L}(x)\bigr)^2}
\cdot
\frac{1+p_d^{\circ L}(x)}{1+p_d^{\circ L}(y)}
\leq
\frac{1-\bigl(p_d^{\circ L}(y)\bigr)^2}
     {1-\bigl(p_d^{\circ L}(x)\bigr)^2}.
\]
Combining this inequality with \eqref{eq:rL_ratio_bound} proves
\eqref{eq:iterated_ratio_bound}.
\end{proof}

\begin{proof}[Proof of Theorem~\ref{thm:deflated_ns}]
Let $M=U\operatorname{diag}(\sigma_1,\ldots,\sigma_q)V^{\top}$ with
$\sigma_1\geq\cdots\geq\sigma_q>0$ be a compact singular value
decomposition of $M$. Then $\operatorname{polar}(M)=UV^{\top}$. The nonzero
singular values of $X^{\mathrm{base}}=M/\norm{M}_F$ are $\sigma_i/\norm{M}_F$,
$i=1,\ldots,q$, and their smallest value is $\sigma_q/\norm{M}_F=\ell$.
Therefore, $\sigma(X^{\mathrm{base}})\subset[\ell,1]$.

By the definition of the deflated initialization
$X^{\mathrm{def}}_{\gamma}=\gamma^{-1}U_kV_k^{\top}+R_k/\norm{R_k}_F$,
its two terms have orthogonal row and column spaces, so its nonzero singular
values are $\gamma^{-1}$ (with multiplicity $k$) and
$\sigma_i/\norm{R_k}_F$ for $i=k+1,\ldots,q$. The smallest of the latter is
$\sigma_q/\norm{R_k}_F=(\norm{M}_F/\norm{R_k}_F)(\sigma_q/\norm{M}_F)=\alpha_k\ell$,
so the smallest nonzero singular value of $X^{\mathrm{def}}_{\gamma}$ is
$\min(\alpha_k\ell,\gamma^{-1})=\varphi_k$, and consequently
$\sigma(X^{\mathrm{def}}_{\gamma})\subset[\varphi_k,1]$: indeed
$\gamma^{-1}\leq1$ since $\gamma\geq1$, and
$\sigma_i/\norm{R_k}_F\leq\norm{R_k}_2/\norm{R_k}_F\leq1$ for $i>k$.
We record two endpoint inequalities for later use. Since $1\leq k<q$,
we have $q\geq2$, hence $\norm{M}_F>\sigma_q$ and $\ell<1$. Moreover,
$\norm{R_k}_F\geq\sigma_q$, with equality exactly when $k=q-1$, so
$\ell<\alpha_k\ell\leq1$; together with $\gamma^{-1}\in(0,1]$ this gives
$0<\varphi_k\leq1$.

If $X=U\operatorname{diag}(x_1,\ldots,x_q)V^{\top}$ with $x_i\in[0,1]$,
then $\mathcal{N}_d(X)=U\operatorname{diag}\bigl(p_d(x_1),\ldots,p_d(x_q)\bigr)V^{\top}$.
Thus, after $L$ iterations, a singular value with initial value $x_i$
becomes $p_d^{\circ L}(x_i)$.

Lemma~\ref{lem:one_step_ns_residual} shows that $p_d$ is nondecreasing
and maps $[0,1]$ into itself; the same is therefore true of
$p_d^{\circ L}$. Hence the largest error occurs in a direction
corresponding to the smallest initial singular value. If
$x_{\min}^{\star}$ denotes the smallest nonzero singular value of
$X^{\star}$, where $\star\in\{\mathrm{base},\mathrm{def}\}$, then
\begin{equation}
\bigl\|
\operatorname{polar}(M)
-\mathcal{N}_d^{\circ L}(X^{\star})
\bigr\|_2
=
1-p_d^{\circ L}(x_{\min}^{\star}).
\label{eq:matrix_error_scalarized}
\end{equation}
Indeed, the nonzero singular values of $\operatorname{polar}(M)$ are all one,
and $p_d^{\circ L}(x_i)\in[0,1]$.

For the deflated initialization,
$x_{\min}^{\mathrm{def}}=\varphi_k$. Since
$0<\varphi_k\leq1$, applying
\eqref{eq:iterated_absolute_bound} with $x=\varphi_k$ gives
\[
\bigl\|
\operatorname{polar}(M)
-\mathcal{N}_d^{\circ L}(X^{\mathrm{def}}_{\gamma})
\bigr\|_2
=
1-p_d^{\circ L}(\varphi_k)
\leq
\bigl(1-\varphi_k^2\bigr)^{(d+1)^L}.
\]

For the relative bound, note that
$x_{\min}^{\mathrm{base}}=\ell$ and
$x_{\min}^{\mathrm{def}}=\varphi_k$. The hypothesis $\gamma^{-1}>\ell$,
together with $\alpha_k\ell>\ell$, gives $\varphi_k>\ell$. Since
$0<\ell<\varphi_k\leq1$ and $\ell<1$, the hypotheses of
Lemma~\ref{lem:iterated_ns_residual} hold; applying
\eqref{eq:iterated_ratio_bound} with $x=\ell$ and
$y=\varphi_k$, together with \eqref{eq:matrix_error_scalarized},
gives
\[
\frac{
\bigl\|
\operatorname{polar}(M)
-\mathcal{N}_d^{\circ L}(X^{\mathrm{def}}_{\gamma})
\bigr\|_2
}{
\bigl\|
\operatorname{polar}(M)
-\mathcal{N}_d^{\circ L}(X^{\mathrm{base}})
\bigr\|_2
}
=
\frac{1-p_d^{\circ L}(\varphi_k)}{1-p_d^{\circ L}(\ell)}
\leq
\left(
\frac{1-\varphi_k^2}{1-\ell^2}
\right)^{(d+1)^L}.
\]
Finally, $\varphi_k>\ell$ gives $\varphi_k^2>\ell^2$, and
$\varphi_k\leq1$ gives $1-\varphi_k^2\geq0$. Since $\ell<1$, the
denominator $1-\ell^2$ is positive, and we conclude
$0\leq(1-\varphi_k^2)/(1-\ell^2)<1$, which proves the final strict inequality.
\end{proof}

%!TEX root = ../main.tex

\section{Supporting Figures for \S\ref{sec:deflation}}
\label{app:toy_deflation}

Figure~\ref{fig:spectrum} shows the momentum spectra measured during GPT-2
training that motivate \S\ref{sec:spectrum}, and
Figure~\ref{fig:toy_deflation} traces every singular value of the toy
spectrum of Example~\ref{ex:toy} through five steps of the classical
degree-five Newton--Schulz iteration \eqref{eq:muon_ns}, without and with
deflation of the head $\sigma_1$.

\begin{figure}[h]
\begin{center}
\includegraphics[width=0.8\linewidth]{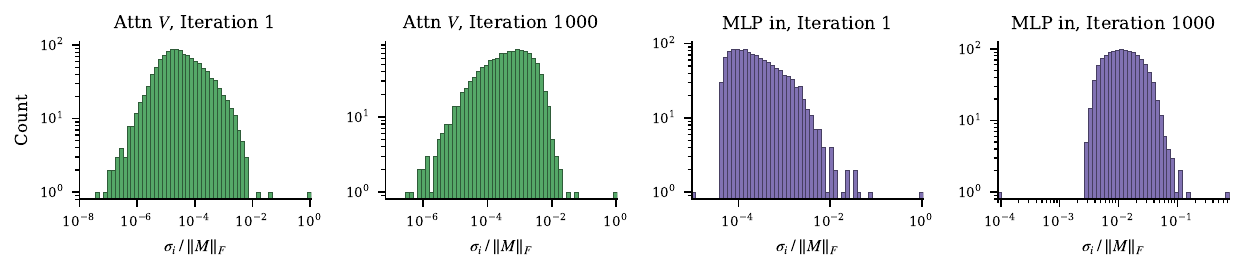}
\end{center}
\caption{Frobenius-normalized singular value spectra $\hat{\sigma}_i(M)$ of the
momentum matrices tracked by Muon in the middle transformer block (layer
$\lfloor N/2\rfloor$ of $N$ layers) of GPT-2 Large: the four attention
projections $\mathrm{Q}$, $\mathrm{K}$, $\mathrm{V}$, $\mathrm{O}$ and the two
MLP projections, at training steps $1$ and $1000$.}
\label{fig:spectrum}
\end{figure}

\begin{figure}[h]
\begin{center}
\includegraphics[width=0.85\linewidth]{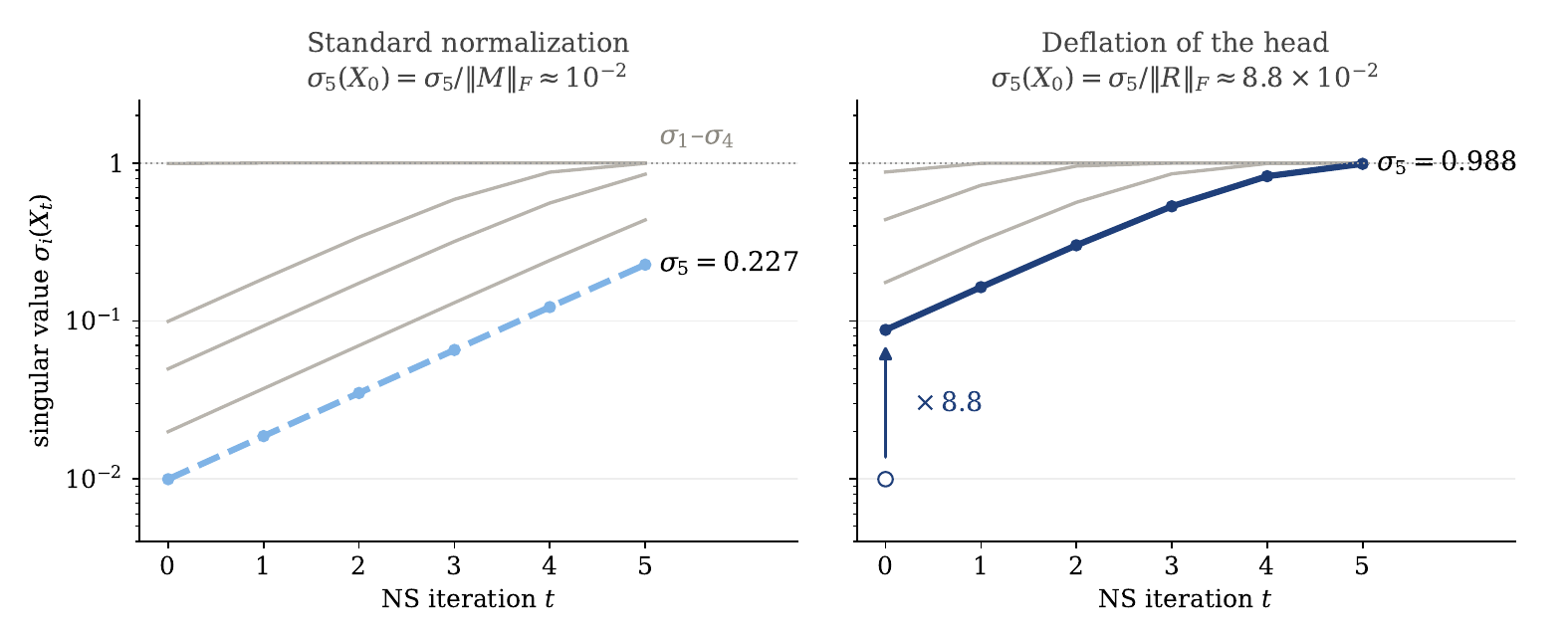}
\end{center}
\caption{Effect of deflation on the toy spectrum
$(\sigma_1,\ldots,\sigma_5)=(10,1,0.5,0.2,0.1)$ of Example~\ref{ex:toy} under
five steps of the classical degree-five Newton--Schulz iteration
\eqref{eq:muon_ns}.
Left: standard Frobenius normalization $X_0=M/\norm{M}_F$.
Right: deflation of the head followed by normalization of the residual.
The hollow marker shows where $\sigma_5$ would start without deflation, and
the arrow its lift by $\alpha_1=\norm{M}_F/\norm{R}_F\approx8.8$; the remaining
singular values (gray) are lifted by the same factor.}
\label{fig:toy_deflation}
\end{figure}

%!TEX root = ../main.tex

\section{Batched Randomized SVD: Implementation and Estimation Accuracy}
\label{app:brsvd}

This appendix states the batched randomized SVD used by
\S\ref{sec:deflated_muon} (Algorithm~\ref{alg:brsvd}), records the
implementation choices behind it, and quantifies how accurately the two
estimators of \S\ref{sec:deflation}---Algorithm~\ref{alg:brsvd} for Muon and
the warm-started tracking of Algorithm~\ref{alg:deflation_admm} for ADMM---recover
the components they deflate.

\begin{algorithm}[h]
\caption{\textsc{Batched Randomized SVD}}
\label{alg:brsvd}
\begin{algorithmic}[1]
\Require $A\in\R^{B\times n\times m}$, $n\ge m$; window $w\in(0,1)$; oversampling ratio $\delta\ge0$; subspace iterations $J\ge0$
\Ensure $U\in\R^{B\times n\times r}$, $s\in\R^{B\times r}$, $V\in\R^{B\times m\times r}$, with $r=\ceil{wm}$
\State $r\gets\ceil{wm}$;\quad $r'\gets\min\{m,\ r+\ceil{\delta m}\}$
\State $\Omega\gets\mathrm{Gaussian}(B,m,r')$
\State $Q\gets\mathrm{CholeskyQR}(A\Omega)$
\For{$j=1,\ldots,J$}
   \State $Y\gets\mathrm{CholeskyQR}(A^{\top}Q)$;\quad $Q\gets\mathrm{CholeskyQR}(AY)$
\EndFor
\State $G\gets Q^{\top}A$
\State $(U_G,s,V)\gets\mathrm{BatchedThinSVD}(G)$ \Comment{$G=U_G\operatorname{diag}(s)V^{\top}$, $s_1\ge\cdots\ge s_{r'}\ge0$}
\State $U\gets QU_G$
\State $U\gets U_{:,\,:,\,1:r}$;\quad $s\gets s_{:,\,1:r}$;\quad $V\gets V_{:,\,:,\,1:r}$ \Comment{drop the oversampling columns}
\State \Return $(U,s,V)$
\end{algorithmic}
\end{algorithm}

\paragraph{Grouping.}
Momentum matrices are grouped by shape: all matrices with the same $(n,m)$
are stacked into one batch, and Algorithm~\ref{alg:brsvd} runs once per
group per optimizer step. Wide matrices ($n<m$) are transposed before
grouping and the resulting polar factor is transposed back, using
$\operatorname{polar}(M^{\top})=\operatorname{polar}(M)^{\top}$.
For GPT-2 Large this yields two groups per optimizer step: the $1280\times1280$
group holding the attention $\mathrm{Q}$, $\mathrm{K}$, $\mathrm{V}$, and
$\mathrm{O}$ projections of all $36$ blocks ($B=144$), and the
$5120\times1280$ group holding the two MLP projections ($B=72$).

\paragraph{Operating point.}
The experiments use $w=0.025$ and $\delta=0.025$, so the sketch has
$r'=2r=\ceil{0.05\,m}$ columns, with $J=1$ subspace iteration.

\paragraph{Symmetric warm-started variant.}
The deflated PSD projection (\S\ref{sec:deflated_psd}) uses a symmetric
variant of Algorithm~\ref{alg:brsvd} in the spirit of warm-started block
eigensolvers~\citep{knyazev2001lobpcg}. For a symmetric input
$Z\in\Sn$, the routine tracks a single orthonormal basis $V\in\R^{n\times K}$
with $K=\ceil{w'n}$, returns Ritz values and vectors from the
Rayleigh--Ritz step, and warm-starts the subspace from the previous ADMM
iterate; since consecutive iterates are close, a single subspace-iteration
pass ($J=1$) suffices per ADMM iteration.

\paragraph{How inexact components enter the bounds.}
Let $(\widehat U_k,\hat s,\widehat V_k)$ be the estimated triplets that Algorithm~\ref{alg:deflated_muon} actually uses, $\widehat H:=\widehat U_k\operatorname{diag}(\hat s)\widehat V_k^{\top}$ and $\widehat R:=M-\widehat H$, and let
\[
\widehat X_0:=\widehat R/\|\widehat R\|_F+\gamma^{-1}\widehat U_k\widehat V_k^{\top}
\]
be the resulting entry; $X^{\mathrm{def}}_{\gamma}$ of \eqref{eq:deflated_init} is the same entry built from the exact triplets with the same $k$.
The iteration $\mathcal{N}_d^{\circ L}$ acts on the singular values of its argument and leaves its singular vectors unchanged, so the output $\mathcal{N}_d^{\circ L}(\widehat X_0)$ has singular values $p_d^{\circ L}\bigl(\sigma_i(\widehat X_0)\bigr)$ in the singular directions of $\widehat X_0$.
By Weyl's inequality, $|\sigma_i(\widehat X_0)-\sigma_i(X^{\mathrm{def}}_{\gamma})|\leq\eta:=\|\widehat X_0-X^{\mathrm{def}}_{\gamma}\|_2$, hence $\sigma(\widehat X_0)\subset[\varphi_k-\eta,\,1+\eta]$: the spectral floor of Theorem~\ref{thm:deflated_ns} degrades from $\varphi_k$ to $\varphi_k-\eta$, and since $p_d$ is nondecreasing with $p_d(1+t)-1=O(t^{d+1})$ (Lemma~\ref{lem:one_step_ns_residual}), the part of the spectrum above one is contracted at the same order; in particular, the orthogonality defect $\|X_L^{\top}X_L-I\|$ of the output depends on $\sigma(\widehat X_0)$ alone.
The limit of the iteration is $\operatorname{polar}(\widehat X_0)$ rather than $\operatorname{polar}(M)=\operatorname{polar}(X^{\mathrm{def}}_{\gamma})$, and the perturbation theory of the polar factor~\citep{li1995new,higham2008functions} bounds the difference by $2\|\widehat X_0-X^{\mathrm{def}}_{\gamma}\|_F/(2\varphi_k-\eta)$.
Both effects are governed by $\eta$, and expanding
\[
\widehat X_0-X^{\mathrm{def}}_{\gamma}
=
\frac{H_k-\widehat H}{\|\widehat R\|_F}
+R_k\Bigl(\frac{1}{\|\widehat R\|_F}-\frac{1}{\|R_k\|_F}\Bigr)
+\gamma^{-1}\bigl(\widehat U_k\widehat V_k^{\top}-U_kV_k^{\top}\bigr)
\]
shows that $\eta$ is small whenever the head singular values and the two head subspaces are accurate; the singular values are compared with an exact SVD below, and the same holds for Algorithm~\ref{alg:deflation_admm} with eigenpairs in place of singular triplets.

\paragraph{Bounds for randomized subspace iteration.}
Algorithm~\ref{alg:brsvd} is randomized subspace iteration with $J$ power steps and a Gaussian sketch of $r'$ columns~\citep{halko2011finding}.
Write $\Omega_1=V_k^{\top}\Omega$ and $\Omega_2=V_{\perp}^{\top}\Omega$ for the components of the sketch along and orthogonal to the exact right head subspace, and $p=r'-k$ for the oversampling.
\citet{saibaba2019randomized} bounds the canonical angles between the estimated and the exact left head subspaces by $\sin\theta_i\leq(\sigma_{k+1}/\sigma_i)^{2J+1}\|\Omega_2\Omega_1^{\dagger}\|_2$ for $i\leq k$, with an analogous bound for the right subspace, and \citet{gu2015subspace} shows $\sigma_i\geq\hat s_i\geq\sigma_i\big/\sqrt{1+(\sigma_{k+1}/\sigma_i)^{2(2J+1)}\|\Omega_2\Omega_1^{\dagger}\|_2^2}$ for the singular values; for a Gaussian sketch, $\mathbb{E}\|\Omega_2\Omega_1^{\dagger}\|_2\leq\sqrt{k/(p-1)}+e\sqrt{(k+p)(m-k)}/p$~\citep{halko2011finding,saibaba2019randomized}.
The error decays with the gap ratio $\sigma_{k+1}/\sigma_i$ raised to the power $2J+1$, so the single subspace iteration of the training runs ($J=1$) already suppresses it.
The bounds depend on the gap at the cutoff rather than on the gate threshold $\tau$: the gate certifies $\sigma_{k+1}<\tau\sigma_1<\sigma_k$ but not a gap between $\sigma_k$ and $\sigma_{k+1}$, so when the spectrum is flat around $\tau\sigma_1$ the trailing head directions are resolved less accurately; such directions carry singular values close to $\tau\sigma_1$ on both sides of the cutoff, and exchanging them changes $\widehat X_0$ little.

\paragraph{Accuracy of the estimated head.}
On the $24$ momentum matrices of \S\ref{subsec:convergence_experiments} we run Algorithm~\ref{alg:brsvd} with the constants of the training runs ($w=\delta=0.025$, $J=1$, single precision) for $20$ Gaussian sketches each and compare the $k$ deflated singular values---$k$ being the width chosen by the gate---with the leading $k$ singular values of a double-precision SVD; the gate fires on $21$ of the $24$ matrices.
Table~\ref{tab:rsvd_accuracy} summarizes the relative error $\max_{i\le k}\abs{\hat s_i-\sigma_i}/\sigma_i$ for each momentum matrix over the training steps and sketches.

\begin{table}[h]
\centering
\footnotesize
\setlength{\tabcolsep}{5pt}
\begin{tabular}{lccc}
\toprule
Momentum matrix & median & min & max \\
\midrule
Attention $\mathrm{Q}$ & 8.0e-3 & 2.3e-8 & 5.6e-2 \\
Attention $\mathrm{K}$ & 1.3e-2 & 2.9e-7 & 6.9e-2 \\
Attention $\mathrm{V}$ & 8.8e-8 & 8.1e-9 & 9.0e-7 \\
Attention $\mathrm{O}$ & 5.8e-7 & 1.1e-8 & 2.8e-2 \\
MLP (first) & 6.7e-3 & 3.4e-8 & 4.8e-2 \\
MLP (second) & 5.1e-4 & 1.7e-8 & 2.4e-3 \\
\bottomrule
\end{tabular}
\caption{Relative error $\max_{i\le k}\abs{\hat s_i-\sigma_i}/\sigma_i$ of the deflated singular values estimated by the batched randomized SVD on the momentum matrices of \S\ref{subsec:convergence_experiments}, at the operating point of the training runs; statistics over the training steps at which the gate fires and over $20$ sketches per matrix.}
\label{tab:rsvd_accuracy}
\end{table}

\paragraph{Tracking accuracy along the ADMM trajectory.}
We rerun the deflated ADMM of Figure~\ref{fig:admm_convergence} on the four instances of \S\ref{sec:sdp_experiments} and, every $100$ iterations, eigendecompose the current $Z$ exactly in double precision.
Let $\lambda_i$ be the Ritz values of Algorithm~\ref{alg:deflation_admm}, $\mathcal{I}=\{i:\abs{\lambda_i}>\tau s_1\}$ the deflated set, and $\lambda^{\star}_i$ the exact eigenvalues, matched by sign and magnitude order.
Figure~\ref{fig:tracking_accuracy} reports $\max_{i\in\mathcal I}\abs{\lambda_i-\lambda^{\star}_i}/\abs{\lambda^{\star}_i}$ at the checkpoints where the gate is active: the error stays below $3\times10^{-7}$.

\begin{figure}[h]
\begin{center}
\includegraphics[width=\linewidth]{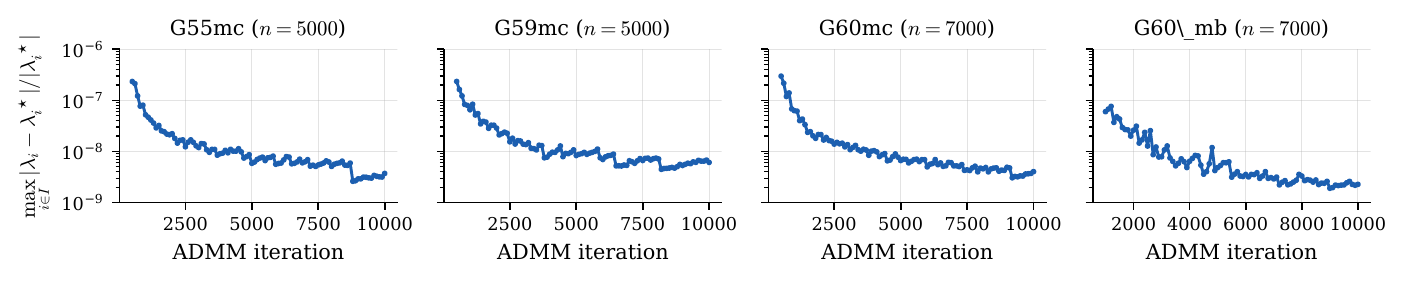}
\end{center}
\caption{Relative error of the deflated Ritz values inside deflated ADMM on the instances of \S\ref{sec:sdp_experiments}, against an exact double-precision eigendecomposition of the current $Z$ every $100$ iterations, at the checkpoints where the gate is active.}
\label{fig:tracking_accuracy}
\end{figure}

%!TEX root = ../main.tex

\section{Deflated PSD Projection: Details and Analysis}
\label{app:deflated_admm}

This appendix collects the material behind \S\ref{sec:deflated_psd}: the full
ADMM iteration (\S\ref{app:admm_sdp}), the construction of the fixed composite
filter (\S\ref{app:filter}), the proofs and the idealized analysis
(\S\ref{app:deflated_admm_error}), and the experimental details
(\S\ref{app:admm_setup}).

\subsection{ADMM for Standard-Form SDP}
\label{app:admm_sdp}

We solve the standard-form pair \eqref{eq:sdp_pair} with a classical
three-block ADMM~\citep{wen2010admm}; with penalty parameter $\sigma>0$, one
iteration is
\begin{equation}
\begin{aligned}
y^{(k+1)}
&=
(\mathcal{A}\mathcal{A}^{*})^{-1}
\bigl(
\sigma^{-1}b
-\mathcal{A}(\sigma^{-1}X^{(k)}+S^{(k)}-C)
\bigr),\\
S^{(k+1)}
&=
\Pi_{\psd{n}}\bigl(Z^{(k+1)}\bigr),
\qquad
Z^{(k+1)}
:=
C-\mathcal{A}^{*}y^{(k+1)}-\sigma^{-1}X^{(k)},\\
X^{(k+1)}
&=
X^{(k)}
+\sigma\bigl(S^{(k+1)}+\mathcal{A}^{*}y^{(k+1)}-C\bigr).
\end{aligned}
\label{eq:admm_iter}
\end{equation}
The $y$-update is a linear solve with the fixed matrix
$\mathcal{A}\mathcal{A}^{*}$ and the $X$-update is a matrix addition, so the
per-iteration cost is dominated by the projection in the $S$-update, which we
evaluate by Algorithm~\ref{alg:deflation_admm}.

\paragraph{Normalization scale.}
To place the spectrum of $X_0$ in $[-1,1]$, the scale $\theta$ (and
$\hat{\theta}$ for the residual) is obtained from a 20-step Lanczos
run~\citep[Chapter~13]{parlett1998symmetric} on the squared matrix: if
$(\rho,v)$ is the largest Ritz pair of $Z^2$, we use
$\theta=\sqrt{\rho+\norm{Z^2v-\rho v}_2}$.
This value is a certified upper bound on $\norm{Z}_2$ when $\rho$ is the Ritz
value closest to $\lambda_{\max}(Z^2)$~\citep{kang2025psdprojection}; taking
the \emph{largest} Ritz value of a random-start run is a heuristic, which
\citet{kang2025psdprojection} found to yield a valid upper bound in all their
experiments.

\subsection{Composite Minimax Filter}
\label{app:filter}

The component polynomials of the fixed filter $g$ are constructed offline in
two stages, following \citet{kang2025psdprojection}, whose construction adapts
the composite minimax sign approximation of \citet{lee2022minimax}. Let
$\mathcal{P}_{d_t}^{\mathrm{odd}}$ denote the space of odd real
polynomials of degree at most $d_t$, and let $\varepsilon>0$ exclude a
small neighborhood of the discontinuity of the sign function. The first
stage constructs a minimax approximation to $\operatorname{sign}(x)$
through sequential Remez problems. Starting from
$[a_1,b_1]=[\varepsilon,1]$, the polynomial at stage $t$ is chosen as
\begin{equation}
g_t
=
\operatorname*{arg\,min}_{g\in\mathcal{P}_{d_t}^{\mathrm{odd}}}
\max_{x\in[a_t,b_t]}\abs{g(x)-1},
\qquad
[a_{t+1},b_{t+1}]
=
g_t([a_t,b_t]).
\label{eq:admm_remez_construction}
\end{equation}
Because $g_t$ is odd, it simultaneously approximates $-1$ on the
reflected negative interval. Repeating this construction yields a
minimax-optimal composite approximation
$g_T\circ\cdots\circ g_1$ to the sign function on
$[-1,-\varepsilon]\cup[\varepsilon,1]$.

Since a composition optimized for the sign function is not necessarily
optimal for the matrix ReLU, the second stage initializes the
component polynomials with $\{g_t\}_{t=1}^T$ and jointly refines their
coefficients by approximately minimizing
\begin{equation}
\min_{p_1,\ldots,p_T}
\max_{x\in[-1,1]}
\left|
\frac{x}{2}
\left(1+(p_T\circ\cdots\circ p_1)(x)\right)
-\max(x,0)
\right|.
\label{eq:admm_relu_refinement}
\end{equation}
The maximum is evaluated over a dense set of points, and the polynomial
coefficients are refined by gradient descent while retaining the
composite representation. This avoids expanding the composition into a
single high-degree polynomial and preserves efficient matrix evaluation
through GEMMs.

For all experiments, we use the resulting filter in half precision, with
$\varepsilon=10^{-3}$ and $T=7$ degree-five odd polynomials of the form
$p_t(x)=\alpha_{t,1}x+\alpha_{t,3}x^3+\alpha_{t,5}x^5$.
Their coefficients remain fixed throughout ADMM. Deflation
changes only the normalized input to this filter and the subsequent
restoration of the deflated eigenspaces.

\subsection{Error Analysis}
\label{app:deflated_admm_error}

We first prove Proposition~\ref{prop:deflated_psd_ns}, restated for
self-containment together with the bound on the deflated error itself that the
proof yields alongside the ratio bound. The proof follows the three steps of
Appendix~\ref{app:proof_deflated_ns}: it identifies the spectral floors of the
two initializations, reduces the matrix error to a scalar residual by a
spectral computation, and applies
Lemmas~\ref{lem:one_step_ns_residual}--\ref{lem:iterated_ns_residual} at the
floors $\ell$ and $\alpha_k\ell$.

\begin{proprestated}
Let $Z\in\Sn$ be nonzero with nonzero eigenvalues $\lambda_1,\ldots,\lambda_q$
ordered so that $\sigma_i=\abs{\lambda_i}$ satisfies
$\sigma_1\geq\cdots\geq\sigma_q>0$, let $1\leq k<q$ with $\sigma_{k+1}<\sigma_1$,
let $\gamma\geq1$, and let
$X^{\mathrm{base}}=Z/\norm{Z}_2$,
$X^{\mathrm{def}}_{\gamma}=\gamma^{-1}U_k\operatorname{sign}(\Lambda_k)U_k^{\top}+R_k/\norm{R_k}_2$,
$\ell=\sigma_q/\norm{Z}_2$, and $\alpha_k=\norm{Z}_2/\norm{R_k}_2$ be as in
\S\ref{subsec:error_bounds}, eq.~\eqref{eq:deflated_init} with $A=Z$, and let
$\widehat{\Pi}^{\mathrm{base}}(Z)=\half Z\bigl(I+\mathcal{N}_d^{\circ L}(X^{\mathrm{base}})\bigr)$ and
$\widehat{\Pi}^{\mathrm{def}}_{\gamma}(Z)=U_k(\Lambda_k)_{+}U_k^{\top}+\half R_k\bigl(I+\mathcal{N}_d^{\circ L}(X^{\mathrm{def}}_{\gamma})\bigr)$.
Then $\sigma(X^{\mathrm{base}})\subset[\ell,1]$ and
$\sigma(X^{\mathrm{def}}_{\gamma})\subset\{\gamma^{-1}\}\cup[\alpha_k\ell,1]$,
and $\widehat{\Pi}^{\mathrm{def}}_{\gamma}(Z)$ does not depend on $\gamma$.
Moreover, for every $d\geq1$ and $L\geq0$,
\begin{equation}
\bigl\|
\widehat{\Pi}^{\mathrm{def}}_{\gamma}(Z)-\Pi_{\psd{n}}(Z)
\bigr\|_2
\leq
\frac{\norm{R_k}_2}{2}
\max_{\alpha_k\ell\leq x\leq1}x\bigl(1-x^2\bigr)^{(d+1)^L}
\leq
\frac{\norm{R_k}_2}{2}
\bigl(1-\alpha_k^2\ell^2\bigr)^{(d+1)^L},
\label{eq:deflated_psd_absolute}
\end{equation}
and the deflated-to-baseline error ratio satisfies
\[
\frac{
\bigl\|
\widehat{\Pi}^{\mathrm{def}}_{\gamma}(Z)-\Pi_{\psd{n}}(Z)
\bigr\|_2
}{
\bigl\|
\widehat{\Pi}^{\mathrm{base}}(Z)-\Pi_{\psd{n}}(Z)
\bigr\|_2
}
\leq
\left(
\frac{1-\alpha_k^2\ell^2}{1-\ell^2}
\right)^{(d+1)^L}
<1.
\]
\end{proprestated}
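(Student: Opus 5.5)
The plan is to diagonalize everything in a single eigenbasis of $Z$ and reduce both error matrices to scalar residuals, as in the proof of Theorem~\ref{thm:deflated_ns}. Write $Z=\sum_{i=1}^q\lambda_iu_iu_i^{\top}$ with orthonormal $u_i$, completed by an orthonormal basis of $\ker Z$. Then $R_k=\sum_{i>k}\lambda_iu_iu_i^{\top}$ and $\norm{R_k}_2=\sigma_{k+1}$.

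\emph{Spectra and independence of $\gamma$.} The matrix $X^{\mathrm{base}}$ has eigenvalues $\lambda_i/\sigma_1$, whose magnitudes lie in $[\ell,1]$. The matrix $X^{\mathrm{def}}_{\gamma}$ is diagonal in the same basis, with eigenvalue $\gamma^{-1}\operatorname{sign}(\lambda_i)$ for $i\le k$, eigenvalue $\lambda_i/\sigma_{k+1}$ for $i>k$, and $0$ on $\ker Z$. The tail magnitudes lie in $[\sigma_q/\sigma_{k+1},1]=[\alpha_k\ell,1]$, which gives the spectral claim.

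Since $\mathcal{N}_d$ applies the odd polynomial $p_d$ to each eigenvalue, $\mathcal{N}_d^{\circ L}(X^{\mathrm{def}}_{\gamma})$ is diagonal in the same basis. Multiplying by $R_k$ kills the head directions $u_1,\ldots,u_k$, so $\half R_k\bigl(I+\mathcal{N}_d^{\circ L}(X^{\mathrm{def}}_{\gamma})\bigr)$ involves only the tail values $p_d^{\circ L}(\lambda_i/\sigma_{k+1})$. In particular it does not depend on $\gamma$.

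\emph{Scalarizing the error.} The head contributes $U_k(\Lambda_k)_+U_k^{\top}$, which matches $\Pi_{\psd{n}}(Z)$ exactly on $u_1,\ldots,u_k$. The kernel contributes nothing to either side. For a tail index $i$ and a scale $\theta$, the error entry is $\frac{\lambda_i}{2}\bigl(1+p_d^{\circ L}(\lambda_i/\theta)\bigr)-(\lambda_i)_+$. Using oddness, $p_d^{\circ L}(\lambda_i/\theta)=\operatorname{sign}(\lambda_i)\,p_d^{\circ L}(\sigma_i/\theta)$, and checking the two signs separately, this entry equals $-\frac{\sigma_i}{2}\bigl(1-p_d^{\circ L}(\sigma_i/\theta)\bigr)$ in both cases. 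Both error matrices are therefore diagonal, and their spectral norms are
\[
E^{\mathrm{def}}=\max_{i>k}\tfrac{\sigma_i}{2}\bigl(1-p_d^{\circ L}(\sigma_i/\sigma_{k+1})\bigr),
\qquad
E^{\mathrm{base}}=\max_{i\le q}\tfrac{\sigma_i}{2}\bigl(1-p_d^{\circ L}(\sigma_i/\sigma_1)\bigr).
\]
These entries are nonnegative by Lemma~\ref{lem:one_step_ns_residual}.

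\emph{Absolute bound.} Put $x=\sigma_i/\sigma_{k+1}\in[\alpha_k\ell,1]$. Then each tail entry equals $\frac{\sigma_{k+1}}{2}\,x\bigl(1-p_d^{\circ L}(x)\bigr)$. By \eqref{eq:iterated_absolute_bound} this is at most $\frac{\norm{R_k}_2}{2}\,x(1-x^2)^{(d+1)^L}$, which gives the first inequality of \eqref{eq:deflated_psd_absolute}. The second follows from $x\le1$ and the fact that $(1-x^2)$ is decreasing on $[\alpha_k\ell,1]$.

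\emph{Ratio bound.} This step needs care because the two maxima may be attained at different indices, so I compare index by index. Fix a tail index $i>k$ and set $x_i=\sigma_i/\sigma_1$. Then $x_i<1$ because $\sigma_i\le\sigma_{k+1}<\sigma_1$, and $y_i=\alpha_kx_i=\sigma_i/\sigma_{k+1}\in[x_i,1]$. Applying \eqref{eq:iterated_ratio_bound} with $x=x_i$ and $y=y_i$ gives
\[
\sigma_i\bigl(1-p_d^{\circ L}(y_i)\bigr)\le\Bigl(\tfrac{1-\alpha_k^2x_i^2}{1-x_i^2}\Bigr)^{(d+1)^L}\sigma_i\bigl(1-p_d^{\circ L}(x_i)\bigr).
\]
Next, $f(x)=\frac{1-\alpha_k^2x^2}{1-x^2}=\alpha_k^2-\frac{\alpha_k^2-1}{1-x^2}$ is decreasing on $[\ell,1)$ because $\alpha_k>1$. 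Hence its value at $x_i\ge\ell$ is at most $f(\ell)=\frac{1-\alpha_k^2\ell^2}{1-\ell^2}$. Because $\alpha_k\ell\le1$, this value lies in $[0,1)$, so raising it to the power $(d+1)^L$ is monotone.

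Now take $i^\star$ attaining $E^{\mathrm{def}}$. The display above gives $E^{\mathrm{def}}\le f(\ell)^{(d+1)^L}$ times the base entry at $i^\star$, and that entry is at most $E^{\mathrm{base}}$. It remains to check that $E^{\mathrm{base}}>0$. The $q$-th entry is positive since $x_q<1$, by the positivity statement in the proof of Lemma~\ref{lem:iterated_ns_residual}, so the ratio is well defined. Finally, $f(\ell)<1$ strictly because $\alpha_k>1$ and $\ell>0$, which gives the strict inequality.
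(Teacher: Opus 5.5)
Your proof is correct and follows the paper's argument essentially step for step. You diagonalize in an eigenbasis of $Z$, reduce both errors to the weighted scalar residuals $\frac{\sigma_i}{2}\bigl(1-p_d^{\circ L}(\cdot)\bigr)$, apply Lemma~\ref{lem:iterated_ns_residual} index by index with $x=x_i$ and $y=\alpha_k x_i$, and finish with the monotonicity of $(1-\alpha_k^2x^2)/(1-x^2)$. One detail deserves an explicit line: raising $f(x_i)\le f(\ell)$ to the power $(d+1)^L$ requires $f(x_i)\ge0$, not just $f(\ell)\ge0$, and this holds because $y_i\le1$.
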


\begin{proof}[Proof of Proposition~\ref{prop:deflated_psd_ns}]
Write the spectral decomposition $Z=\sum_{i=1}^{q}\lambda_iu_iu_i^{\top}$ with
orthonormal $u_1,\ldots,u_q$ and $\sigma_i=\abs{\lambda_i}$, so that
$U_k\Lambda_kU_k^{\top}=\sum_{i\leq k}\lambda_iu_iu_i^{\top}$,
$R_k=\sum_{i>k}\lambda_iu_iu_i^{\top}$, and
$\Pi_{\psd{n}}(Z)=\sum_{i=1}^{q}(\lambda_i)_+u_iu_i^{\top}$.
Set $\theta:=\theta(Z)=\norm{Z}_2=\abs{\lambda_1}$ and
$\theta_k:=\theta(R_k)=\norm{R_k}_2=\abs{\lambda_{k+1}}$, so that $\alpha_k=\theta/\theta_k$.

\emph{Step 1: spectral floors.}
The nonzero eigenvalues of $X^{\mathrm{base}}=Z/\theta$ are $\lambda_i/\theta$,
$i=1,\ldots,q$, with magnitudes in $[\abs{\lambda_q}/\theta,1]=[\ell,1]$; hence
$\sigma(X^{\mathrm{base}})\subset[\ell,1]$.
The two terms of $X^{\mathrm{def}}_{\gamma}$ act on orthogonal eigenspaces, so its
nonzero eigenvalues are $\gamma^{-1}\operatorname{sign}(\lambda_i)$ for $i\leq k$
and $\lambda_i/\theta_k$ for $i>k$.
The magnitudes of the latter lie in $[\abs{\lambda_q}/\theta_k,1]$, and
$\abs{\lambda_q}/\theta_k=(\theta/\theta_k)(\abs{\lambda_q}/\theta)=\alpha_k\ell$;
hence $\sigma(X^{\mathrm{def}}_{\gamma})\subset\{\gamma^{-1}\}\cup[\alpha_k\ell,1]$.
We record two endpoint inequalities for later use: the hypothesis
$\abs{\lambda_{k+1}}<\abs{\lambda_1}$ gives $\alpha_k>1$ and
$\ell\leq\abs{\lambda_{k+1}}/\abs{\lambda_1}<1$, and
$\abs{\lambda_q}\leq\abs{\lambda_{k+1}}$ gives $\alpha_k\ell\leq1$; thus
$0<\ell<\alpha_k\ell\leq1$.

\emph{Step 2: scalarization.}
For a symmetric $X$ we have $XX^{\top}=X^2$, so
\eqref{eq:general_ns_iteration} reads
$\mathcal{N}_d(X)=\sum_{i=0}^{d}a_iX^{2i+1}=p_d(X)$, with $p_d$ the odd
scalar polynomial of Lemma~\ref{lem:one_step_ns_residual}.
Consequently, if $X=\sum_ix_iu_iu_i^{\top}$, then
$\mathcal{N}_d^{\circ L}(X)=\sum_ip_d^{\circ L}(x_i)u_iu_i^{\top}$, and
$p_d^{\circ L}(-x)=-p_d^{\circ L}(x)$.
Since $R_ku_i=0$ for $i\leq k$, the padded head of $X^{\mathrm{def}}_{\gamma}$
is annihilated in the product
\[
R_k\,\mathcal{N}_d^{\circ L}\bigl(X^{\mathrm{def}}_{\gamma}\bigr)
=
\sum_{i>k}\lambda_i\,p_d^{\circ L}(\lambda_i/\theta_k)\,u_iu_i^{\top},
\]
which does not involve $\gamma$; this proves that
$\widehat{\Pi}^{\mathrm{def}}_{\gamma}(Z)$ is independent of $\gamma$.
Using $(\lambda)_+=\frac{\lambda}{2}\bigl(1+\operatorname{sign}(\lambda)\bigr)$
and the oddness of $p_d^{\circ L}$, we have
$\lambda\bigl(p_d^{\circ L}(\lambda/\vartheta)-\operatorname{sign}(\lambda)\bigr)
=-\abs{\lambda}\bigl(1-p_d^{\circ L}(\abs{\lambda}/\vartheta)\bigr)$
for every $\lambda\neq0$ and every scale $\vartheta>0$.
The head terms $U_k(\Lambda_k)_{+}U_k^{\top}$ and
$\sum_{i\leq k}(\lambda_i)_+u_iu_i^{\top}$ cancel, and the kernel of $Z$
contributes nothing to either approximation, so
\begin{align}
\widehat{\Pi}^{\mathrm{def}}_{\gamma}(Z)-\Pi_{\psd{n}}(Z)
&=
-\half\sum_{i>k}\abs{\lambda_i}
\bigl(1-p_d^{\circ L}(\abs{\lambda_i}/\theta_k)\bigr)u_iu_i^{\top},
\label{eq:psd_error_def}\\
\widehat{\Pi}^{\mathrm{base}}(Z)-\Pi_{\psd{n}}(Z)
&=
-\half\sum_{i=1}^{q}\abs{\lambda_i}
\bigl(1-p_d^{\circ L}(\abs{\lambda_i}/\theta)\bigr)u_iu_i^{\top}.
\label{eq:psd_error_base}
\end{align}
Let $x_i:=\abs{\lambda_i}/\theta\in[\ell,1]$ for $i=1,\ldots,q$, so that
$\abs{\lambda_i}/\theta_k=\alpha_kx_i\in[\alpha_k\ell,1]$ for $i>k$.
By Lemma~\ref{lem:one_step_ns_residual}, $p_d^{\circ L}$ maps $[0,1]$ into
itself, so every coefficient in
\eqref{eq:psd_error_def}--\eqref{eq:psd_error_base} is nonnegative; both error
matrices are negative semidefinite, and by the unitary invariance of the
spectral norm,
\begin{align}
\bigl\|\widehat{\Pi}^{\mathrm{def}}_{\gamma}(Z)-\Pi_{\psd{n}}(Z)\bigr\|_2
&=
\frac{\theta_k}{2}\max_{k<i\leq q}
\alpha_kx_i\bigl(1-p_d^{\circ L}(\alpha_kx_i)\bigr),
\label{eq:psd_error_scalarized}\\
\bigl\|\widehat{\Pi}^{\mathrm{base}}(Z)-\Pi_{\psd{n}}(Z)\bigr\|_2
&=
\frac{\theta}{2}\max_{1\leq i\leq q}
x_i\bigl(1-p_d^{\circ L}(x_i)\bigr).
\label{eq:psd_error_scalarized_base}
\end{align}
These are the counterparts of \eqref{eq:matrix_error_scalarized}; the weight
$x_i$ is new, because the projection error in the direction of $\lambda_i$ is
proportional to $\abs{\lambda_i}$.

\emph{Step 3: bounds at the floors.}
For $i>k$, apply \eqref{eq:iterated_absolute_bound} with
$x=\alpha_kx_i\in[\alpha_k\ell,1]$:
\[
\alpha_kx_i\bigl(1-p_d^{\circ L}(\alpha_kx_i)\bigr)
\leq
\alpha_kx_i\bigl(1-\alpha_k^2x_i^2\bigr)^{(d+1)^L}
\leq
\max_{\alpha_k\ell\leq x\leq1}x(1-x^2)^{(d+1)^L}
\leq
(1-\alpha_k^2\ell^2)^{(d+1)^L},
\]
where the last step uses $x\leq1$ and the monotonicity of $1-x^2$.
Inserting this into \eqref{eq:psd_error_scalarized} proves
\eqref{eq:deflated_psd_absolute}.

For the ratio bound, fix $i>k$. Then $0<x_i\leq\alpha_kx_i\leq1$ and
$x_i\leq\abs{\lambda_{k+1}}/\abs{\lambda_1}<1$, so
\eqref{eq:iterated_ratio_bound} applies with $x=x_i$ and $y=\alpha_kx_i$:
\[
\frac{1-p_d^{\circ L}(\alpha_kx_i)}{1-p_d^{\circ L}(x_i)}
\leq
\left(\frac{1-\alpha_k^2x_i^2}{1-x_i^2}\right)^{(d+1)^L}
\leq
\left(\frac{1-\alpha_k^2\ell^2}{1-\ell^2}\right)^{(d+1)^L}
=:\rho.
\]
The second inequality holds because
$\frac{1-\alpha^2x^2}{1-x^2}=1-(\alpha^2-1)\frac{x^2}{1-x^2}$ is decreasing in
$x\in(0,1)$ for $\alpha>1$ and $x_i\geq\ell$; both bases are nonnegative
because $\alpha_kx_i\leq1$, so the inequality survives raising to the power
$(d+1)^L$.
Multiplying by $\abs{\lambda_i}=\theta_k\alpha_kx_i=\theta x_i$ gives, for every
$i>k$,
\[
\theta_k\,\alpha_kx_i\bigl(1-p_d^{\circ L}(\alpha_kx_i)\bigr)
\leq
\rho\,\theta\,x_i\bigl(1-p_d^{\circ L}(x_i)\bigr)
\leq
\rho\,\theta\max_{1\leq j\leq q}x_j\bigl(1-p_d^{\circ L}(x_j)\bigr).
\]
Taking the maximum over $i>k$ and using
\eqref{eq:psd_error_scalarized}--\eqref{eq:psd_error_scalarized_base} proves
the inequality in \eqref{eq:deflated_psd_ratio}.
The denominator is positive: $0<x_q=\ell<1$, and the recursion
\eqref{eq:iterated_residual_recursion} in the proof of
Lemma~\ref{lem:iterated_ns_residual} shows inductively that
$1-\bigl(p_d^{\circ L}(\ell)\bigr)^2>0$, hence $1-p_d^{\circ L}(\ell)>0$.
Finally, $0<\ell<\alpha_k\ell\leq1$ gives
$0\leq(1-\alpha_k^2\ell^2)/(1-\ell^2)<1$, which is the strict inequality.
\end{proof}

The proof uses the exact scales $\norm{Z}_2$ and $\norm{R_k}_2$ only through
$\theta\geq\norm{Z}_2$, $\hat{\theta}\geq\norm{R_k}_2$, and $\hat{\theta}<\theta$;
hence Proposition~\ref{prop:deflated_psd_ns} remains valid for the Lanczos
upper estimates of Algorithm~\ref{alg:deflation_admm}, with
$\ell=\abs{\lambda_q}/\theta$, $\alpha_k=\theta/\hat{\theta}$,
$\alpha_k\ell=\abs{\lambda_q}/\hat{\theta}$, and $\norm{R_k}_2$ replaced by
$\hat{\theta}$ in the prefactor of \eqref{eq:deflated_psd_absolute}.

\paragraph{The fixed filter.}
The implemented filter is fixed offline on $[\varepsilon,1]$
(\S\ref{app:filter}); the following bound takes it as given.

\begin{proposition}[Deflated projection with a fixed filter]
\label{prop:fixed_filter}
Let $Z\in\Sn$ have nonzero eigenvalues $\lambda_1,\ldots,\lambda_q$ with
$\abs{\lambda_1}\geq\cdots\geq\abs{\lambda_q}>0$, let $1\leq k<q$, and let
$Z=U_k\Lambda_kU_k^{\top}+R_k$ be the exact head split of \eqref{eq:identities}.
Let $g$ be a fixed odd polynomial (possibly composite) with sign error
$\delta_\varepsilon=\max_{\varepsilon\leq x\leq1}\abs{g(x)-1}$ on
$[\varepsilon,1]$, and let $\hat{\theta}\geq\norm{R_k}_2$.
If $\abs{\lambda_q}\geq\varepsilon\hat{\theta}$, then
\[
\Bigl\|
U_k(\Lambda_k)_{+}U_k^{\top}+\half R_k\bigl(I+g(R_k/\hat{\theta})\bigr)-\Pi_{\psd{n}}(Z)
\Bigr\|_2
\;\leq\;
\frac{\hat{\theta}}{2}\,
\max_{\varepsilon\leq x\leq1}x\,\abs{g(x)-1}
\;\leq\;
\frac{\hat{\theta}}{2}\,\delta_\varepsilon.
\]
\end{proposition}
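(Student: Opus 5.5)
The proof follows Step 2 of the proof of Proposition~\ref{prop:deflated_psd_ns}, with the Newton--Schulz composition replaced by the arbitrary odd polynomial $g$.

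\emph{Spectral form.} Write $Z=\sum_{i=1}^{q}\lambda_iu_iu_i^{\top}$ with orthonormal $u_i$, so that $R_k=\sum_{i>k}\lambda_iu_iu_i^{\top}$. Since $g$ is a polynomial, evaluated as a matrix function on the symmetric matrix $R_k/\hat{\theta}$, we have
\[
g(R_k/\hat{\theta})=\sum_{i>k}g(\lambda_i/\hat{\theta})u_iu_i^{\top}+g(0)P_0,
\]
where $P_0$ is the projector onto $\ker R_k$. Because $g$ is odd, $g(0)=0$, and in any case the kernel part is annihilated by the left factor $R_k$. Hence
\[
\half R_k\bigl(I+g(R_k/\hat{\theta})\bigr)=\sum_{i>k}\tfrac{\lambda_i}{2}\bigl(1+g(\lambda_i/\hat{\theta})\bigr)u_iu_i^{\top}.
\]
The head term $U_k(\Lambda_k)_{+}U_k^{\top}$ cancels exactly against $\sum_{i\le k}(\lambda_i)_+u_iu_i^{\top}$ in $\Pi_{\psd{n}}(Z)$. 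The kernel of $Z$ contributes nothing to either side. The error matrix is therefore diagonal in the eigenbasis, with entries
\[
\tfrac{\lambda_i}{2}\bigl(1+g(\lambda_i/\hat{\theta})\bigr)-(\lambda_i)_+ \quad\text{for } i>k.
\]

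\emph{Scalar entries.} Use $(\lambda)_+=\tfrac{\lambda}{2}(1+\operatorname{sign}\lambda)$ together with the oddness of $g$. Each entry equals $\tfrac{\lambda_i}{2}\bigl(g(\lambda_i/\hat\theta)-\operatorname{sign}\lambda_i\bigr)$, which is $\tfrac{\abs{\lambda_i}}{2}\bigl(g(x_i)-1\bigr)$ up to sign, where $x_i=\abs{\lambda_i}/\hat{\theta}$. By unitary invariance, the spectral norm of the error is therefore $\max_{i>k}\tfrac{\hat\theta}{2}\,x_i\abs{g(x_i)-1}$.

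\emph{Range of $x_i$.} We must check that every $x_i$ lies in $[\varepsilon,1]$. The upper bound holds because $\abs{\lambda_i}\le\norm{R_k}_2\le\hat{\theta}$ for every $i>k$. The lower bound holds because $\abs{\lambda_i}\ge\abs{\lambda_q}\ge\varepsilon\hat{\theta}$ by hypothesis. Taking the maximum over $x\in[\varepsilon,1]$ gives the first inequality. The second inequality follows from $x\le1$ and the definition of $\delta_\varepsilon$.

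\emph{Where care is needed.} The argument is straightforward; the only delicate point is keeping the signs straight for negative $\lambda_i$. Nothing requires $g$ to map into $[-1,1]$, which is why the bound carries an absolute value rather than the signed residual used in Proposition~\ref{prop:deflated_psd_ns}.
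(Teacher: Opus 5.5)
Your proposal is correct and follows the paper's proof essentially step for step. Both arguments apply spectral functional calculus to $R_k$ and cancel the head against $\sum_{i\le k}(\lambda_i)_+u_iu_i^{\top}$. Both then use the oddness of $g$ to reduce the error norm to $\frac{\hat{\theta}}{2}\max_{i>k}x_i\abs{g(x_i)-1}$, and check $\varepsilon\le x_i\le1$ from $\abs{\lambda_q}\ge\varepsilon\hat{\theta}$ and $\hat{\theta}\ge\norm{R_k}_2$. Your ``up to sign'' is in fact an exact equality for both signs of $\lambda_i$, which is harmless since only absolute values enter.
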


\begin{proof}[Proof of Proposition~\ref{prop:fixed_filter}]
Write the spectral decomposition
$Z=\sum_{i=1}^{q}\lambda_iu_iu_i^{\top}$, so that
$R_k=\sum_{i=k+1}^{q}\lambda_iu_iu_i^{\top}$ and
$U_k(\Lambda_k)_{+}U_k^{\top}=\sum_{i=1}^{k}(\lambda_i)_+u_iu_i^{\top}$, the $u_i$ being the columns of $U_k$.
Define $h(x):=\frac{x}{2}\bigl(1+g(x)\bigr)$, so that
$\half R_k\bigl(I+g(R_k/\hat{\theta})\bigr)=\hat{\theta}\,h(R_k/\hat{\theta})$
and $h(0)=0$: the head directions and the kernel of $Z$ contribute nothing to
the filtered term.
Using spectral functional calculus and
$(\lambda)_+=\frac{\lambda}{2}\bigl(1+\operatorname{sign}(\lambda)\bigr)$,
\[
U_k(\Lambda_k)_{+}U_k^{\top}+\hat{\theta}\,h(R_k/\hat{\theta})-\Pi_{\psd{n}}(Z)
=
\frac12
\sum_{i=k+1}^{q}
\lambda_i
\left[
g\bigl(\lambda_i/\hat{\theta}\bigr)
-\operatorname{sign}(\lambda_i)
\right]u_iu_i^{\top}.
\]
By the orthogonality of the eigenvectors and the unitary invariance of the
spectral norm, with $x_i:=\lambda_i/\hat{\theta}$,
\[
\Bigl\|
U_k(\Lambda_k)_{+}U_k^{\top}+\hat{\theta}\,h(R_k/\hat{\theta})-\Pi_{\psd{n}}(Z)
\Bigr\|_2
=
\frac{\hat{\theta}}{2}
\max_{k<i\leq q}
\abs{x_i}
\,\bigl|
g(x_i)
-\operatorname{sign}(x_i)
\bigr|.
\]
Since $\hat{\theta}\geq\norm{R_k}_2$ and
$\abs{\lambda_q}\geq\varepsilon\hat{\theta}$, every $x_i$ with $i>k$
satisfies $\varepsilon\leq\abs{x_i}\leq1$. Because $g$ is odd,
$\abs{x}\,\abs{g(x)-\operatorname{sign}(x)}$ equals $\abs{x}\,\abs{g(\abs{x})-1}$, so
the maximum is at most
$\max_{\varepsilon\leq x\leq1}x\abs{g(x)-1}\leq\delta_\varepsilon$.
\end{proof}

If the floor condition fails, an eigenvalue with $\abs{\lambda_i}<\varepsilon\hat{\theta}$
contributes at most
$\half\abs{\lambda_i}\bigl(1+\abs{g(x_i)}\bigr)
\leq\frac{\hat{\theta}}{2}\,\varepsilon\bigl(1+\max_{\abs{x}\leq\varepsilon}\abs{g(x)}\bigr)$
to the maximum above, which yields a two-regime bound.

We next analyze the idealized case in which the composite filter is
regenerated on the deflated interval, so that the sequential minimax
optimality of the construction \eqref{eq:admm_remez_construction} can be
invoked. This result explains what the fixed filter gives up, and it recovers
the doubly exponential rate of Theorem~\ref{thm:deflated_ns}.

\begin{theorem}[Idealized sequential-minimax filter]
\label{thm:deflated_psd_error}
Let $Z=U_k\Lambda_kU_k^{\top}+R_k$ and $k$ be as in Proposition~\ref{prop:fixed_filter},
let $\theta_k:=\norm{R_k}_2=\abs{\lambda_{k+1}}$, and let
$a_k:=\abs{\lambda_q}/\theta_k\in(0,1]$, so that the nonzero eigenvalues of
$R_k/\theta_k$ lie in $[-1,-a_k]\cup[a_k,1]$.
Let $g=g_T\circ\cdots\circ g_1$ be the composite filter generated by the
sequential minimax construction \eqref{eq:admm_remez_construction}
initialized on $[a_k,1]$, where each $g_t$ is an odd polynomial of degree at
most $2d+1$, with integers $d,T\geq1$.
Define $h_g(x):=\frac{x}{2}\bigl(1+g(x)\bigr)$.
Then the deflated PSD approximation
$\widehat{P}_{\mathrm{def}}:=U_k(\Lambda_k)_{+}U_k^{\top}+\theta_k h_g(R_k/\theta_k)$
satisfies
\begin{equation}
\bigl\|
\widehat{P}_{\mathrm{def}}-\Pi_{\psd{n}}(Z)
\bigr\|_2
\leq
\frac{\theta_k}{2}(1-a_k^2)^{(d+1)^T}.
\label{eq:deflated_psd_explicit_bound}
\end{equation}
\end{theorem}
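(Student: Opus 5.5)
The plan is to reuse the scalarization of Proposition~\ref{prop:fixed_filter} and then compare the minimax composite $g$ with the classical Newton--Schulz composite $p_d^{\circ T}$ by induction on the stages. Scalarization is exactly as in that proof. Since the head cancels and $h_g(0)=0$, the error is diagonal in the eigenbasis of $Z$:
\[
\bigl\|\widehat{P}_{\mathrm{def}}-\Pi_{\psd{n}}(Z)\bigr\|_2=\frac{\theta_k}{2}\max_{k<i\leq q}\abs{x_i}\,\abs{g(\abs{x_i})-1},\qquad x_i=\lambda_i/\theta_k .
\]
Here $\abs{x_i}\in[a_k,1]$, and we use the oddness of $g$. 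Since $\abs{x_i}\leq1$, the error is at most $\frac{\theta_k}{2}\max_{x\in[a_k,1]}\abs{g(x)-1}$. It therefore suffices to show
\[
\max_{x\in[a_k,1]}\abs{g(x)-1}\leq(1-a_k^2)^{(d+1)^T}.
\]

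The key step is an inductive comparison on the intervals $[a_t,b_t]$ produced by \eqref{eq:admm_remez_construction}, starting from $[a_1,b_1]=[a_k,1]$. The invariant I aim for is
\[
[a_{t+1},b_{t+1}]\subset[1-\delta_t,\,1+\delta_t],\qquad 1-\delta_t\geq p_d^{\circ t}(a_k),
\]
where $\delta_t:=\max_{[a_t,b_t]}\abs{g_t-1}$ is the minimax error at stage $t$.

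I would use two facts. First, $p_d$ is an admissible competitor in the odd class of degree $\leq 2d+1$. On any interval $[a,b]\subset[a,1]$, Lemma~\ref{lem:one_step_ns_residual} gives $p_d$ nondecreasing with $p_d\leq1$, so $\max_{[a,1]}\abs{p_d-1}=1-p_d(a)$.

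Second, I need to control what happens when $b_t>1$, since the minimax image may overshoot $1$. Equioscillation gives $b_{t+1}\leq1+\delta_t$ and $a_{t+1}\geq1-\delta_t$. To compare with $p_d$ on an interval like $[1-\delta,1+\delta]$, I would pass to a symmetrized statement. On a symmetric interval around $1$, the minimax error on $[1-\delta,1+\delta]$ is at most the error obtained by rescaling: apply $p_d$ to $x/(1+\delta)$, then rescale the output. I expect the clean way is to prove the scale-invariant claim that, for an interval $[a,b]$, the minimax error depends only on $a/b$ and is at most $1-\mathrm{something}$ given by $p_d$ at $a/b$, up to the normalization $2/(1+\text{ratio})$. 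This follows the standard analysis of composite minimax sign filters (Lee et al., Amsel et al.'s Polar Express optimality argument).

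The cleanest route is likely to quote or reprove the Polar Express-type result: the greedy sequential minimax composite is optimal among all compositions of $T$ odd polynomials of degree $\leq2d+1$ on $[a_k,1]$. In particular, its error is at most that of $p_d^{\circ T}$. The bound then follows from the proof of Theorem~\ref{thm:deflated_ns}, via $\max_{[a_k,1]}\abs{p_d^{\circ T}-1}=1-p_d^{\circ T}(a_k)$ together with \eqref{eq:iterated_absolute_bound} at $x=a_k$, $L=T$.

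The main obstacle is precisely this greedy-optimality step, because the intervals $[a_t,b_t]$ exceed $1$. I would prove it by induction using two properties: monotonicity of the optimal stage error in the ratio $a_t/b_t$, and the fact that the optimal stage maps $[a,b]$ onto an interval with a better ratio than any competitor's image. This gives $a_{t+1}/b_{t+1}\geq(1-\delta_t)/(1+\delta_t)$ and dominance over the $p_d$ trajectory at every stage. If that becomes delicate, the fallback is to prove only the one-sided domination $1-\delta_T\geq p_d^{\circ T}(a_k)$ by induction. This weaker statement is exactly what the final inequality requires.
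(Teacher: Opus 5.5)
Your main route is exactly the paper's proof: scalarize as in Proposition~\ref{prop:fixed_filter}, invoke the optimality of the greedy sequential-minimax composite against the admissible competitor $p_d^{\circ T}$ (the paper simply cites Theorem~1 of \citet{kang2025psdprojection} here), and finish with \eqref{eq:iterated_absolute_bound} at $x=a_k$, $L=T$. If you ever reprove the optimality step, carry the ratio invariant $(1-\delta_t)/(1+\delta_t)\ge p_d^{\circ t}(a_k)$ rather than your one-sided fallback $1-\delta_t\ge p_d^{\circ t}(a_k)$: the ratio invariant propagates from monotonicity in the ratio together with the best-image-ratio property of the minimax stage, whereas the fallback only guarantees that the next interval has ratio $\ge y_t/(2-y_t)$ with $y_t=p_d^{\circ t}(a_k)$, so it does not close the induction without an extra comparison on intervals straddling $1$.
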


\begin{proof}
The spectral computation in the proof of Proposition~\ref{prop:fixed_filter},
with $\hat{\theta}=\theta_k$, gives
\begin{equation}
\bigl\|
\widehat{P}_{\mathrm{def}}-\Pi_{\psd{n}}(Z)
\bigr\|_2
\leq
\frac{\theta_k}{2}
\max_{a_k\leq x\leq1}\abs{1-g(x)}.
\label{eq:deflated_psd_spectral_error}
\end{equation}
It remains to bound the scalar sign-approximation error.
Consider the degree-$(2d+1)$ Newton--Schulz polynomial
$p_{\mathrm{NS},d}(x):=
x\sum_{j=0}^{d}c_j(1-x^2)^j$, with
$c_j:=4^{-j}\binom{2j}{j}$, and its $T$-fold composition
$f_T:=p_{\mathrm{NS},d}^{\circ T}$.
By Lemma~\ref{lem:one_step_ns_residual},
$p_{\mathrm{NS},d}$ maps $[0,1]$ into itself and satisfies
$0\leq1-p_{\mathrm{NS},d}(x)^2\leq(1-x^2)^{d+1}$
for $x\in[0,1]$.
For $x_0=x\in[0,1]$ and
$x_{t+1}=p_{\mathrm{NS},d}(x_t)$, set $r_t:=1-x_t^2$.
The preceding inequality gives $r_{t+1}\leq r_t^{d+1}$,
and induction yields
$0\leq1-f_T(x)^2\leq(1-x^2)^{(d+1)^T}$.
Moreover, $f_T(x)\in[0,1]$, and hence
\[
\abs{1-f_T(x)}
=
\frac{1-f_T(x)^2}{1+f_T(x)}
\leq
(1-x^2)^{(d+1)^T}.
\]
The polynomial $f_T$ satisfies the same degree and composition
constraints as $g$. By Theorem~1 of \citet{kang2025psdprojection},
the sequential minimax construction makes $g$ optimal over this
class for uniform approximation of the sign function on
$[-1,-a_k]\cup[a_k,1]$.
Therefore,
\begin{equation}
\max_{a_k\leq x\leq1}\abs{1-g(x)}
\leq
\max_{a_k\leq x\leq1}\abs{1-f_T(x)}
\leq
\max_{a_k\leq x\leq1}(1-x^2)^{(d+1)^T}
=
(1-a_k^2)^{(d+1)^T}.
\label{eq:deflated_psd_sign_bound}
\end{equation}
Substituting \eqref{eq:deflated_psd_sign_bound} into
\eqref{eq:deflated_psd_spectral_error} proves
\eqref{eq:deflated_psd_explicit_bound}.
\end{proof}

\subsection{Experimental Details}
\label{app:admm_setup}

For the experiments of \S\ref{sec:sdp_experiments}, we monitor the maximum
KKT residual
\begin{equation}
\eta
=
\max\Bigl\{
\tfrac{\norm{\mathcal{A}X-b}_2}{1+\norm{b}_2},\,
\tfrac{\norm{\mathcal{A}^{*}y+S-C}_F}{1+\norm{C}_F},\,
\tfrac{\abs{\inprod{C}{X}-b^{\top}y}}
{1+\abs{\inprod{C}{X}}+\abs{b^{\top}y}},\,
\tfrac{\max\{0,-\lambda_{\min}(X)\}}{1+\norm{b}_2},\,
\tfrac{\max\{0,-\lambda_{\min}(S)\}}{1+\norm{C}_F}
\Bigr\}.
\label{eq:admm_eta}
\end{equation}
Because computing the last two terms requires explicit eigenvalue
information, they are not evaluated during the ADMM iterations. The
convergence curves therefore report the maximum of the first three,
eigenvalue-free terms of \eqref{eq:admm_eta}. At the end of each run, the two
PSD-feasibility terms are additionally evaluated using a double-precision
eigendecomposition of the final iterates. In our experiments, the first three
terms dominate the final two terms, so the eigenvalue-free residual accurately
reflects the observed convergence behavior.

%!TEX root = ../main.tex

\section{Additional Experiments}
\label{app:additional}

\paragraph{Deflation cutoff.}
Table~\ref{tab:cutoff} reports the final validation loss when deflation is
enabled for different percentages of the optimizer steps
(\S\ref{sec:ablations}).

\begin{table}[h]
\centering
\footnotesize
\setlength{\tabcolsep}{6pt}
\begin{tabular}{lccccc}
\toprule
 & & \multicolumn{4}{c}{\textsc{Deflated Muon}, deflation cutoff} \\
\cmidrule(l){3-6}
Mapping & Muon & $10\%$ & $30\%$ & $50\%$ & $100\%$ \\
\midrule
Newton--Schulz & $3.3813$ & $3.3716$ & $3.3604$ & $3.3599$ & $\mathbf{3.3575}$ \\
Polar Express  & $3.3675$ & $3.3587$ & $3.3574$ & $\mathbf{3.3552}$ & $3.3579$ \\
\bottomrule
\end{tabular}
\caption{Final validation loss of GPT-2 Large when deflation is enabled for different percentages of the optimizer steps.
The \emph{Muon} column is the baseline without deflation; bold marks the best cutoff for each mapping, and every tested cutoff improves on the baseline.}
\label{tab:cutoff}
\end{table}

\paragraph{Deflation ratio.}
Tables~\ref{tab:gate_w} and~\ref{tab:gate_tau} report the sweeps over the window
ratio $w$ and the threshold $\tau$ of \S\ref{sec:ablations}: the final validation
loss and the number $k$ of singular pairs deflated per matrix, averaged over the
optimizer steps at which the gate fires.

\begin{table}[H]
\begin{minipage}[t]{0.49\linewidth}
\centering
\scriptsize
\setlength{\tabcolsep}{2.8pt}
\begin{tabular}{lccccc}
\toprule
 & \multicolumn{4}{c}{Window ratio $w$ ($\tau=0.1$)} & \\
\cmidrule(lr){2-5}
 & $0.0025$ & $0.005$ & $0.01$ & $0.025$ & Muon \\
\midrule
\multicolumn{6}{l}{\emph{Newton--Schulz}} \\
Val.\ loss   & $\mathbf{3.3574}$ & $\mathbf{3.3597}$ & $\mathbf{3.3568}$ & $\mathbf{3.3574}$ & $3.3794$ \\
Removed $k$  & $1.5$ & $2.3$ & $3.8$ & $8.8$ & -- \\
\midrule
\multicolumn{6}{l}{\emph{Polar Express}} \\
Val.\ loss   & $\mathbf{3.3546}$ & $\mathbf{3.3573}$ & $\mathbf{3.3542}$ & $\mathbf{3.3581}$ & $3.3671$ \\
Removed $k$  & $1.6$ & $2.5$ & $4.0$ & $9.4$ & -- \\
\bottomrule
\end{tabular}
\captionof{table}{Window ratio $w$ at $\tau=0.1$: final validation loss of GPT-2 Large under \textsc{Deflated Muon} and the average number $k$ of singular pairs deflated per matrix, over the optimizer steps at which the gate fires. The column ``Muon'' shows the baseline without deflation.}
\label{tab:gate_w}
\end{minipage}\hfill
\begin{minipage}[t]{0.49\linewidth}
\centering
\scriptsize
\setlength{\tabcolsep}{2.8pt}
\begin{tabular}{lccccc}
\toprule
 & \multicolumn{4}{c}{Threshold $\tau$ ($w=0.025$)} & \\
\cmidrule(lr){2-5}
 & $0.05$ & $0.1$ & $0.2$ & $0.3$ & Muon \\
\midrule
\multicolumn{6}{l}{\emph{Newton--Schulz}} \\
Val.\ loss   & $\mathbf{3.3581}$ & $\mathbf{3.3574}$ & $\mathbf{3.3575}$ & $\mathbf{3.3565}$ & $3.3794$ \\
Removed $k$  & $8.8$ & $8.8$ & $5.2$ & $3.3$ & -- \\
\midrule
\multicolumn{6}{l}{\emph{Polar Express}} \\
Val.\ loss   & $\mathbf{3.3605}$ & $\mathbf{3.3581}$ & $\mathbf{3.3596}$ & $\mathbf{3.3552}$ & $3.3671$ \\
Removed $k$  & $9.6$ & $9.4$ & $5.8$ & $4.0$ & -- \\
\bottomrule
\end{tabular}
\captionof{table}{Threshold $\tau$ at $w=0.025$, same quantities as Table~\ref{tab:gate_w}. Deflation consistently improves the validation loss over the Muon baseline.}
\label{tab:gate_tau}
\end{minipage}
\end{table}

\paragraph{Matrix-function accuracy on all recorded matrices.}
Figures~\ref{fig:convergence_mlp_all} and~\ref{fig:convergence_attn_all}
extend Figure~\ref{fig:convergence_attn_v} to all six momentum matrices
recorded in the middle block of GPT-2 Large: the MLP input and output
projections and the attention projections $\mathrm{Q}$, $\mathrm{K}$,
$\mathrm{V}$, $\mathrm{O}$, at training steps $1$, $500$, $1000$, and $1500$.
The deflated arms run the batched randomized SVD of Algorithm~\ref{alg:brsvd}
(a sketch of $\ceil{0.025m}$ columns plus $\ceil{0.025m}$ oversampling
columns, one subspace iteration) and the gate with $w=0.025$, $\tau=0.1$,
$\gamma=1.01$ for Newton--Schulz and $\gamma=1.1$ for Polar Express, exactly
as in training; the sketch is seeded per matrix.
Wherever the momentum has a pronounced head, deflation lowers the error at
every iteration for both mappings; on $\mathrm{K}$ at step $1000$ and on the
MLP output projection at steps $1000$ and $1500$ the spectrum has flattened,
the gate does not fire, and the deflated curves coincide with the baselines.
On $\mathrm{K}$ at step $1500$ the exact spectrum sits just above the gate
threshold ($\sigma_{32}/\sigma_1=0.105$ for the window $r=32$), but the
estimated $\sigma_{32}$ lies $5$--$7\%$ below the exact value for every
sketch seed we tried, so the production gate fires and deflates $31$
directions; the deflated curves then improve on the baselines by up to
$0.053$ (Newton--Schulz) and $0.042$ (Polar Express) over the five
iterations.
Replacing the estimated head by the exact one changes no other gate decision
and moves every other curve by at most $0.003$.
With Newton--Schulz the deflated iterate is never less accurate than the
baseline at any iteration; with Polar Express, the curves on the MLP
projections at later steps cross at intermediate iterations and agree to
within $0.003$ after $L=5$ iterations.
Evaluating the polynomial iterations in \texttt{bfloat16}, the training
precision, instead of single precision moves individual curves by at most
$0.017$; the ordering of the arms is unchanged, except that rounding lets the
Newton--Schulz deflated iterate exceed its baseline by up to $0.001$ at one
point (MLP input projection, step $1000$) and widens the Polar Express
agreement on the MLP projections to $0.004$.

\begin{figure}[!t]
\centering
\begin{subfigure}[t]{\linewidth}
\centering
\includegraphics[width=0.94\linewidth]{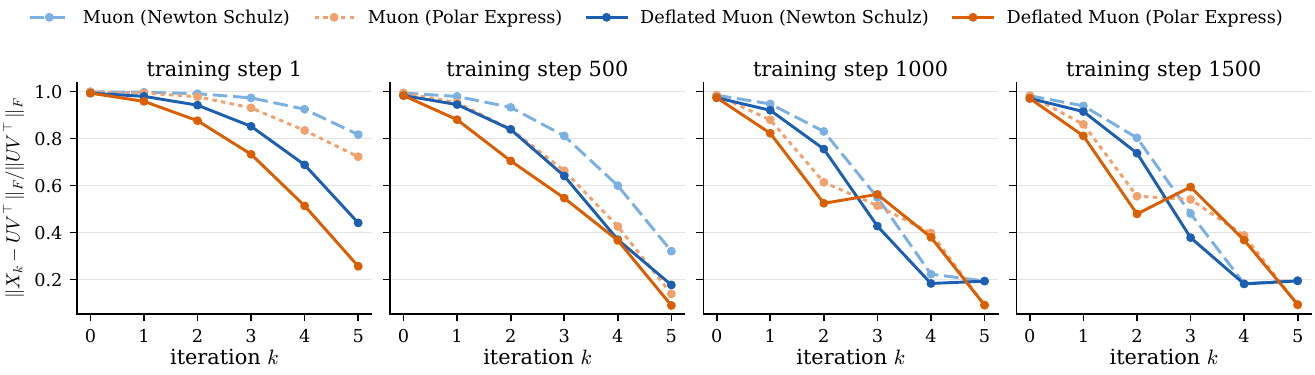}
\caption{MLP input projection.}
\label{fig:convergence_mlp_fc}
\end{subfigure}\\[1.5ex]
\begin{subfigure}[t]{\linewidth}
\centering
\includegraphics[width=0.94\linewidth]{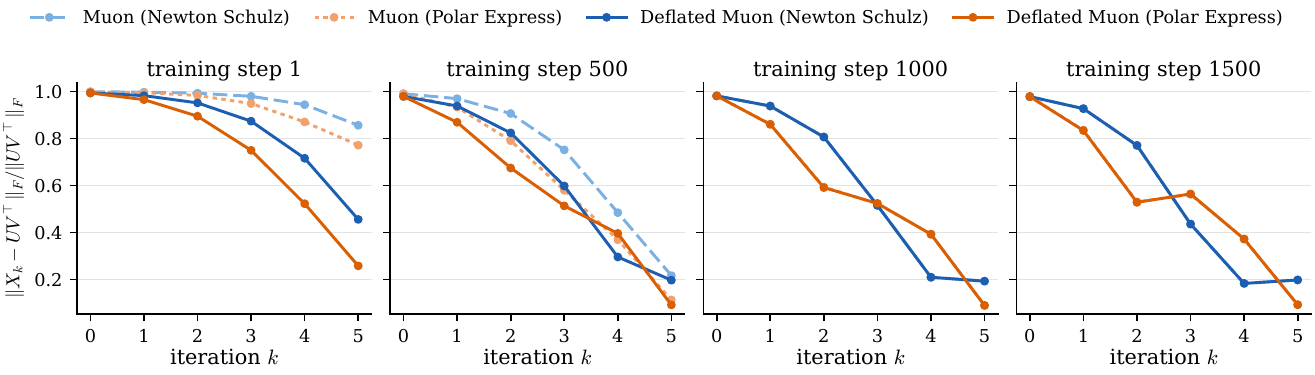}
\caption{MLP output projection.}
\label{fig:convergence_mlp_proj}
\end{subfigure}
\caption{Matrix-level accuracy on the two MLP momentum matrices of the middle
block of GPT-2 Large, recorded at training steps $1$, $500$, $1000$, and
$1500$, as a function of the number of iterations. Each panel reports the
relative error $\norm{X_L-UV^{\top}}_F/\norm{UV^{\top}}_F$ of the
Newton--Schulz and Polar Express mappings and of their deflated counterparts.
Where the gate does not fire (output projection at steps $1000$ and $1500$),
the deflated curve coincides with its baseline.}
\label{fig:convergence_mlp_all}
\end{figure}

\begin{figure}[p]
\centering
\begin{subfigure}[t]{\linewidth}
\centering
\includegraphics[width=0.9\linewidth]{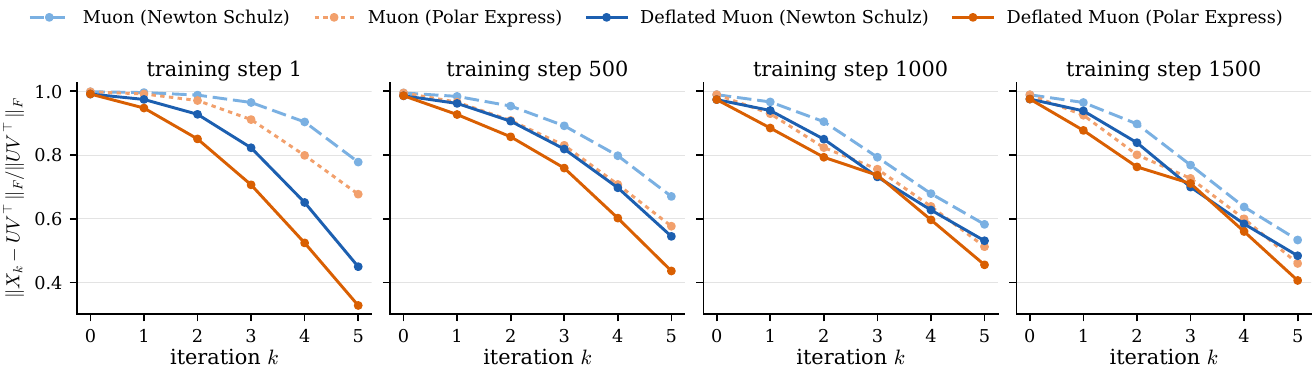}
\caption{Attention $\mathrm{Q}$ projection.}
\label{fig:convergence_attn_q}
\end{subfigure}\\[1.5ex]
\begin{subfigure}[t]{\linewidth}
\centering
\includegraphics[width=0.9\linewidth]{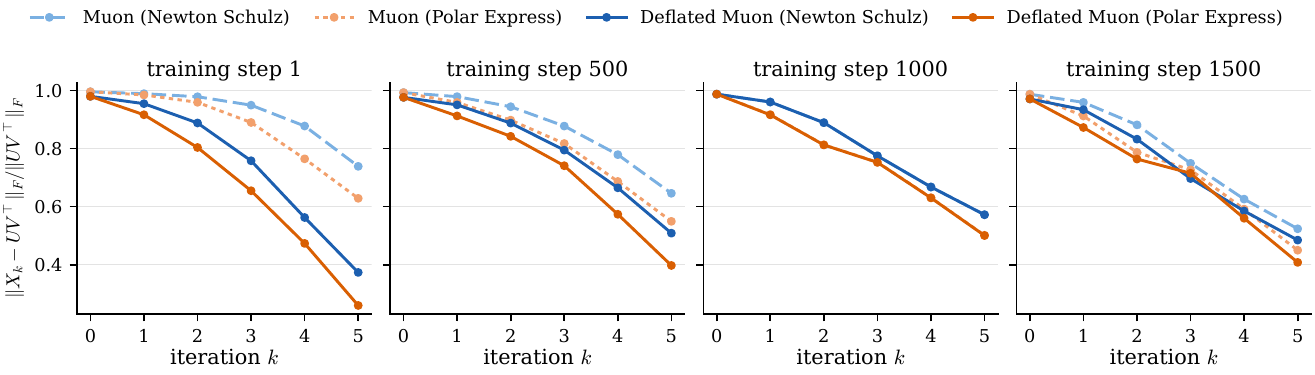}
\caption{Attention $\mathrm{K}$ projection.}
\label{fig:convergence_attn_k}
\end{subfigure}\\[1.5ex]
\begin{subfigure}[t]{\linewidth}
\centering
\includegraphics[width=0.9\linewidth]{gpt-large_convergence_attn_v.pdf}
\caption{Attention $\mathrm{V}$ projection (Figure~\ref{fig:convergence_attn_v} of the main text).}
\label{fig:convergence_attn_v_app}
\end{subfigure}\\[1.5ex]
\begin{subfigure}[t]{\linewidth}
\centering
\includegraphics[width=0.9\linewidth]{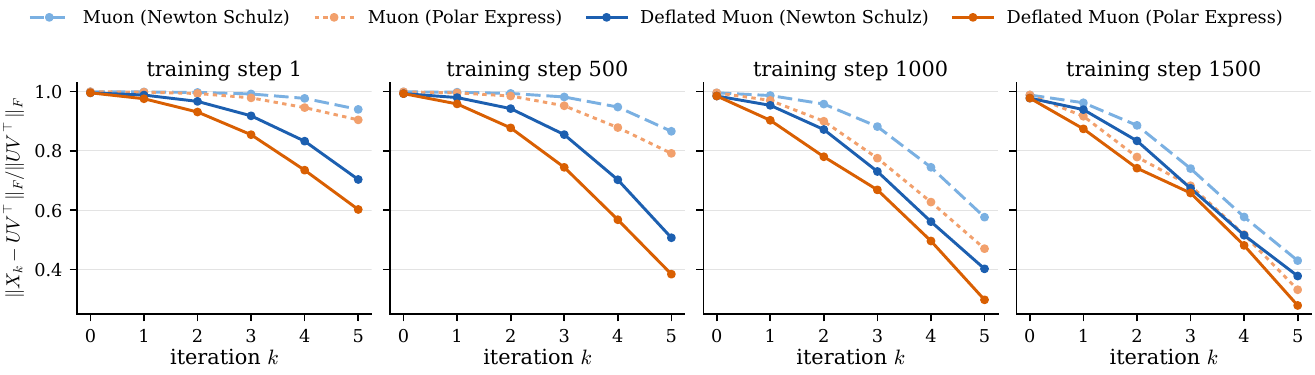}
\caption{Attention $\mathrm{O}$ projection.}
\label{fig:convergence_attn_o}
\end{subfigure}
\caption{Matrix-level accuracy on the four attention momentum matrices of the
middle block of GPT-2 Large, in the setting of Figure~\ref{fig:convergence_mlp_all}.
Where the gate does not fire ($\mathrm{K}$ at step $1000$), the deflated
curve coincides with its baseline; on $\mathrm{K}$ at step $1500$ the
estimated spectrum triggers the gate although the exact one sits just above
the threshold (see text).}
\label{fig:convergence_attn_all}
\end{figure}

\end{document}